\definecolor{hot}{RGB}{65,105,225}
\theoremstyle{plain}
\newtheorem{theorem}{Theorem}[section]
\newtheorem{prop}[theorem]{Proposition}
\newtheorem{cor}[theorem]{Corollary}
\newtheorem{lemma}[theorem]{Lemma}
\newtheorem{thrm}[theorem]{Theorem}
\theoremstyle{definition}
\newtheorem{defn}[theorem]{Definition}
\newtheorem{rmk}[theorem]{Remark}
\newtheorem*{ex*}{Example}
\newcommand\sV{{\mathcal V}}
\newcommand\scrY{\mathscr{Y}}
\newcommand\scrX{\mathscr{X}}
\def\bD{\mathbb{D}}
\newcommand\kk{{\mathbb{K}}}
\newcommand\qq{{\mathbb{Q}}}
\newcommand\zz{{\mathbb{Z}}}
\newcommand\rr{{\mathbb{R}}}
\newcommand\cc{{\mathbb{C}}}
\newcommand\nn{{\mathbb{N}}}
\newcommand{\mb}{\mathcal{M}_{\textrm{B}}}
\DeclareMathOperator{\Supp}{Supp}                
\DeclareMathOperator{\Exp}{Exp}
\DeclareMathOperator{\homo}{Hom}
\DeclareMathOperator{\spec}{Spec}
\DeclareMathOperator{\link}{Link}
\DeclareMathOperator{\Ann}{Ann}
\def\ra{\rightarrow}
\def\bone{\mathbf{1}}
\def\bC{\mathbb{C}}
\def\cM{\mathcal{M}}
\def\cV{\mathcal{V}}
\def\al{\alpha}
\def\bP{\mathbb{P}}
\def\pa{\partial}
\def\cD{\mathcal{D}}
\def\cO{\mathcal{O}}
\def\lra{\longrightarrow}
\def\bQ{\mathbb{Q}}
\def\cL{\mathcal{L}}
\def\bZ{\mathbb{Z}}
\def\lam{\lambda}
\def\sX{\mathscr{X}}
\newcommand{\ubul}{{\,\begin{picture}(-1,1)(-1,-3)\circle*{2}\end{picture}\ }}
\title{Local systems on analytic germ complements}
\begin{document}

\author{Nero Budur}
\address{KU Leuven, Department of Mathematics,
Celestijnenlaan 200B, B-3001 Leuven, Belgium} 
\email{Nero.Budur@wis.kuleuven.be}

\author{Botong Wang}
\address{KU Leuven, Department of Mathematics,
Celestijnenlaan 200B, B-3001 Leuven, Belgium} 
\email{botong.wang@wis.kuleuven.be}

\maketitle

\begin{abstract}
We prove that the cohomology jump loci of rank one local systems on the complement in a small ball of a germ of a complex analytic set are finite unions of torsion translates of subtori. This is a generalization of the classical Monodromy Theorem stating that the eigenvalues of the monodromy on the cohomology of the Milnor fiber of a germ of a holomorphic function are roots of unity.
\end{abstract}

\section{Introduction}

Let $(\mathscr{X},0)\subset (\bC^n,0)$ be the germ of a complex analytic set. Let $B$ be a small open ball at the origin in $\bC^n$, and $U=B\setminus \sX$. Up to homotopy equivalence, the open set $U$  is uniquely determined by the germ $\sX$. We call such open set $U$ the small ball complement of $\sX$. 

By the local conic structure of analytic sets, $U$ has the same homotopy type as the link complement $\partial (\bar{B})\setminus \cL$, where the link of $(\sX,0)$ is defined as $\cL=\partial (\bar{B})\cap \sX$, and $\pa (\bar{B})$ is the boundary sphere of $B$. We will however keep working throughout this paper with small ball complements of analytic germs rather than link complements.

Let $\mb(U)$ be the moduli space of rank 1 local systems on $U$. By identifying a local system with the monodromies around loops in $U$, one can write $\mb(U)\cong\homo(H_1(U,\bZ),\bC^*)$. The cohomology jump loci of  $U$ are defined as usual by 
$$
\sV^i_k(U)=\left\{L\in \mb(U)|\dim H^i(U, L)\geq k\right\}.
$$
Cohomology of local systems can be computed from twisted cochain complexes on the universal cover. It is known that this implies that $\sV^i_k(U)$ are affine $\bZ$-schemes of finite type for any topological space $U$ that has the homotopy type of a finite CW-complex, and that $\sV^i_k(U)$ depend only on the homotopy type of $U$.

The main result is the following: 
\begin{theorem}\label{thrmMain0} Let $U$ be the small ball complement of the germ of a complex analytic set. Then each irreducible component of $\sV^i_k(U)$ is a torsion translate of subtorus of $\mb(U)$. 
\end{theorem}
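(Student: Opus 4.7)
My plan follows the two-step philosophy of the Budur--Wang series on cohomology jump loci. First, I would establish that $\sV^i_k(U)$ is \emph{locally linear at torsion points}, meaning that in exponential coordinates the germ of $\sV^i_k(U)$ at any torsion $L\in\mb(U)$ is a finite union of germs of linear subspaces. Second, I would show that \emph{every} irreducible component of $\sV^i_k(U)$ contains at least one torsion point. Given both, the conclusion is a standard algebraic argument: an algebraic subvariety of a torus whose germ at some torsion point is a finite union of linear subtorus germs must, near that point, be a union of subtorus translates; density of torsion points on any subtorus then upgrades each component to a torsion translate of a subtorus.

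For the linearity step, let $\pi\colon Y\to B$ be a log resolution of $(B,\sX)$ so that $E=\pi^{-1}(\sX)=\bigcup_i E_i$ is an SNC divisor and $U\simeq Y\setminus E$. For any torsion $L\in\mb(U)$, the deformations of $L$ as a local system on $U$ are controlled by a DGLA $\fg_L$ modeled by logarithmic forms on $(Y,E)$ with values in the local system of endomorphisms of $L$. I would equip $\fg_L$ with a mixed Hodge structure by applying Saito's theory of mixed Hodge modules to the open embedding $Y\setminus E\hookrightarrow Y$, suitably relativized over the compact exceptional fiber $\pi^{-1}(0)$. Once $\fg_L$ is a mixed Hodge DGLA, the Budur--Wang formal linearity principle (via the Goldman--Millson / Kapranov formalism with Hodge enhancement) implies that the germ at $L$ of $\sV^i_k(U)$, which is cut out inside the formal moduli space by the cohomology jump ideals of $\fg_L$, is a finite union of linear germs.

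For the torsion-point step, I would work with the explicit presentation of $H_1(U,\bZ)$ coming from meridians of the components $E_i$, which identifies the identity component of $\mb(U)$ with a product of $\bC^*$'s indexed by the $E_i$. The goal is a relative Monodromy Theorem: every component of $\sV^i_k(U)$ consists of characters whose values on these meridians are roots of unity. One route is to compactify $Y$ to a smooth projective variety $\ol Y$ so that $E\cup(\ol Y\setminus Y)$ is SNC, thereby embedding $U$ as an open subset of a smooth quasi-projective variety, and to apply the quasi-projective version of the structure theorem for CJL together with Kashiwara's theorem on roots-of-unity eigenvalues for regular holonomic $D$-modules coming from algebraic maps. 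The content is then to show that the extra characters introduced by the compactifying boundary do not produce non-torsion components that secretly lie over $U$.

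The main obstacle is the torsion-point step. The linearity step is by now a relatively standard application of mixed Hodge theory to deformation DGLAs, and fits directly into the Budur--Wang framework. In contrast, the classical Monodromy Theorem applies only to a single holomorphic germ (the Milnor fiber setting), and extending it to arbitrary analytic germs of arbitrary codimension requires a genuinely new local-to-global passage through the log resolution. The subtle point is that in the local setting no canonical compactification exists, and one must carefully rule out that the choice of compactification produces spurious non-torsion components in $\sV^i_k$ once restricted back to $U$; I expect this interplay between local Hodge theory on $(Y,E)$ and global quasi-projective CJL structure to consume most of the work.
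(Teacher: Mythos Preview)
Your proposal follows a Hodge-theoretic/formality route that the paper explicitly avoids. The paper's proof reduces first to the hypersurface case via Fern\'andez de Bobadilla's theorem that any small ball complement is homotopy equivalent to a Milnor fiber with trivial geometric monodromy, and then proceeds by an arithmetic argument: deform $f$ to a polynomial over $\bar{\qq}$ (Artin approximation plus Bilski--Parusi\'nski--Rond), introduce $\cD$-module jump loci $\Sigma^i_k(f)$ built from $\cD_X[s]f^s$, show these are $Gal(\cc/\bar{\qq})$-invariant and match $\sV^i_k(U)$ under $\Exp$ on small balls via Riemann--Hilbert, and finally apply a Gelfond--Schneider type result (Theorem~\ref{goodexpo}, resting on Laurent's theorem) to force each component of $\sV^i_k(U)$ to be a torsion-translated subtorus. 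No mixed Hodge structures, no DGLAs, no formality; the paper in fact stresses this as a feature.

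Your Step~2 is where your outline has a genuine gap, and the paper singles out exactly this point as the place where the earlier Hodge-theoretic attempt \cite{Li-gap} collapsed. The test case is an isolated point of $\sV^i_k(U)$: how do you show it is torsion? Your suggestion is to compactify the log resolution $Y$ and invoke the quasi-projective structure theorem, but $U$ is not Zariski open in any such compactification --- it is only an \emph{analytic} tubular neighborhood of the exceptional fiber minus the divisor --- so the quasi-projective theorem does not apply to $U$, and there is no functorial way to identify components of $\sV^i_k$ of an ambient quasi-projective variety with those of $U$. As the paper notes, $L_2$ and Hodge methods can treat a \emph{unitary} isolated point, but nothing in your outline produces unitarity a priori; your proposed ``relative Monodromy Theorem'' is not an existing result and is essentially equivalent in strength to the theorem itself.

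Even Step~1 is less routine than you suggest: the mixed Hodge DGLA input to the Budur--Wang linearity argument is established for quasi-projective varieties and compact K\"ahler manifolds, and transporting it to a tubular neighborhood of a compact SNC divisor inside a non-proper resolution requires a de Rham model with MHS at the chain level, not merely MHS on cohomology. But the decisive obstruction remains Step~2.
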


Here, by a subtorus of $\mb(U)$, we mean an affine algebraic subtorus $(\bC^*)^{p}$  of the identity-containing component $\mb(U)_\bone$ of $\mb(U)$. In particular, all such subtori are defined over ${\bZ}$. Note that $\mb(U)_\bone\cong (\bC^*)^{r}$, where $r$ is the first Betti number of $U$.

The homotopy type of the small ball complement of a complex singularity is much more restricted than that of the link of the singularity. Indeed,  by a result of Kapovich-Koll\'ar \cite{KK}, every finitely presented group can be achieved as the fundamental group of the link of an isolated complex singularity. Hence the cohomology jump loci $\sV^1_1$ of such links can be any affine $\bZ$-schemes of finite type, since it is known how $\sV^1_1$ can be defined entirely from $\pi_1$ for any topological space.

In the last section of this paper, we also consider small ball complements in singular ambient spaces.

Theorem \ref{thrmMain0} is the local counterpart of a similar statement proven for the cohomology jump loci of smooth complex quasi-projective varieties in \cite{BW} and of compact K\"ahler manifolds in \cite{W16}. These results built on a long series of partial results due to Green-Lazarsfeld, Arapura, Simpson, Dimca-Papadima, etc. For precise references see the recent survey \cite{BW-surv}. An interesting feature of this paper is that it does not rely on any Hodge theory, formality properties, nor on previously proven cases.

An easy consequence of the structure result for cohomology jump loci is invariance under taking the inverse, which is also the dual, of a rank one local system:

\begin{thrm}\label{corInv}
Let $X$ be a smooth complex quasi-projective variety, a compact K\"ahler manifold, or a small ball complement of the germ of a complex analytic set. Or, more generally, let $X$ be a topological space of homotopy type of a finite CW-complex such that the irreducible components of $\cV^i_k(X)$ are torsion translated subtori of $\mb(X)$. Then for any rank one local system $L$ on $X$, non-canonically, 
$$
H^i(X,L)\cong H^i(X,L^{-1}).
$$
\end{thrm}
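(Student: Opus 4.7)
The plan is to derive Theorem~\ref{corInv} from the set-theoretic equality $\sV^i_k(X) = \iota(\sV^i_k(X))$, where $\iota: \mb(X) \to \mb(X)$ is the inverse map $L \mapsto L^{-1}$. Granting this, since
$$
\dim H^i(X,L) = \max\{k \geq 0 : L \in \sV^i_k(X)\},
$$
one obtains $\dim H^i(X,L) = \dim H^i(X,L^{-1})$ for every rank one local system $L$, yielding the non-canonical isomorphism $H^i(X,L) \cong H^i(X, L^{-1})$.

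By hypothesis, each irreducible component $W$ of $\sV^i_k(X)$ has the form $W = \zeta \cdot T$ with $\zeta \in \mb(X)$ torsion and $T \subseteq \mb(X)_{\bone}$ a subtorus. Since $T^{-1} = T$ for any algebraic subtorus, the image $\iota(W) = \zeta^{-1} T$ is again a torsion translate of a subtorus, irreducible and of the same dimension as $W$. It remains to show that $\iota(W) \subseteq \sV^i_k(X)$ for every such component $W$. The key input, already recalled in the introduction, is that $\sV^i_k(X)$ is an affine $\bZ$-scheme inside $\mb(X) = \spec \bZ[H_1(X,\bZ)]$, because it is cut out by the Fitting ideals of a complex of finitely generated free $\bZ[H_1(X,\bZ)]$-modules arising from a finite CW model. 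Consequently $\sV^i_k(X)(\bC)$ is preserved by complex conjugation on $\mb(X)(\bC) = \homo(H_1(X,\bZ), \bC^*)$. For a torsion local system $L$, all values of $L$ are roots of unity, which are sent to their inverses by complex conjugation, so conjugation of $L$ agrees with $L^{-1}$; therefore $L \in \sV^i_k(X) \iff L^{-1} \in \sV^i_k(X)$ whenever $L$ is torsion.

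To conclude, fix an irreducible component $W = \zeta T$. The torsion points $\zeta t$ with $t \in T$ torsion form a Zariski dense subset of $W$ (since torsion points of an algebraic torus are Zariski dense), and each of them lies in $\sV^i_k(X)$; by the previous step their inverses also lie in $\sV^i_k(X)$, and form a Zariski dense subset of the irreducible set $\iota(W)$. This forces $\iota(W) \subseteq \sV^i_k(X)$. Taking the union over all components yields $\iota(\sV^i_k(X)) \subseteq \sV^i_k(X)$, and applying $\iota$ once more gives equality, as required. No obstacle of real substance appears: beyond the structural hypothesis on the components of $\sV^i_k(X)$, the only ingredient is the $\bZ$-defined-ness of the jump loci, which is what makes the corollary ``easy'' once the deep structure theorem is available.
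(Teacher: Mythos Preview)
Your proof is correct and rests on the same underlying idea as the paper's: the $\bZ$-definedness of $\sV^i_k(X)$ supplies a symmetry that, combined with the torsion-translated-subtorus structure, forces invariance under $L\mapsto L^{-1}$. The execution differs slightly. The paper picks, for a given component $\rho T$, a Galois automorphism $\sigma\in Gal(\bar{\qq}/\qq)$ with $\sigma(\rho)=\rho^{-1}$; since any algebraic subtorus is defined over $\qq$, one has $\sigma(T)=T$ and hence $\sigma(\rho T)=\rho^{-1}T\subset\sV^i_k(X)$ directly, with no density step needed. You instead use the single automorphism given by complex conjugation, which inverts only the torsion points, and then invoke Zariski density of torsion in $\zeta T$ to pass to the whole component. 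Both routes are short; the paper's avoids the density argument at the cost of invoking a $\rho$-dependent Galois element, while yours uses one fixed automorphism and a standard density fact.
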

The proof of this theorem is essentially the same as the proof by A. Dimca of \cite[Corollary 4.9]{D-ad}, where the case of $H^1$ for rank one local systems on smooth complex quasi-projective varieties was proved.

Theorem \ref{thrmMain0} is a generalization of the classical Monodromy Theorem stating that the eigenvalues of the monodromy on the Milnor fiber of a germ of a holomorphic function are roots of unity. More precisely, letting
$$
\cV(U)=\bigcup_{i\in\bZ} \cV^i_1(U)=\{L \in\mb(U) \mid H^\ubul (U,L)\ne 0\}
$$
be the cohomology support loci, we have:

\begin{prop}\label{propEig}
Let $f: (\cc^n, 0)\to (\cc, 0)$ be a germ of  a holomorphic function. Let $U$ be the small ball complement of $f^{-1}(0)$ and 
$$
f^*: \mb(\bC^*)=\bC^*\lra \mb(U) 
$$
be the inverse image map on local systems under $f|_U:U\ra \bC^*$. Then $(f^*)^{-1}(\cV(U))$ is the set of eigenvalues of the monodromy on the cohomology of the Milnor fiber of $f$ at $0$.
\end{prop}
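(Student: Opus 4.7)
The plan is to realize $U$ up to homotopy as the total space of a locally trivial fibration over $\bC^*$, and then compute $H^{*}(U, L_\lambda)$ by applying the Leray spectral sequence to twisted coefficients. The key input is Milnor's theorem: for sufficiently small $\epsilon, \delta > 0$ the restriction
\[
f \colon \widetilde U := B_\epsilon \cap f^{-1}(D_\delta^*) \longrightarrow D_\delta^*
\]
is a locally trivial fibration whose fiber is the Milnor fiber $F$ of $f$ at $0$, and the open inclusion $\widetilde U \hookrightarrow U$ is a homotopy equivalence. The monodromy of this fibration around the puncture induces on each $H^q(F, \bC)$ the Milnor monodromy operator, which I denote $T_q$.

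Fix $\lambda \in \bC^* = \mb(\bC^*)$, let $M_\lambda$ be the corresponding rank one local system on $\bC^*$, and set $L_\lambda := f^* M_\lambda$. The Leray spectral sequence for $f$ combined with the projection formula yields
\[
E_2^{p,q} = H^p\bigl(D_\delta^*, \, M_\lambda \otimes R^q f_* \bC\bigr) \ \Longrightarrow \ H^{p+q}(U, L_\lambda),
\]
and $M_\lambda \otimes R^q f_* \bC$ is the local system on $D_\delta^* \simeq S^1$ with stalk $H^q(F, \bC)$ and monodromy $\lambda \cdot T_q$. Because $S^1$ has cohomological dimension one, the spectral sequence collapses at $E_2$, and for any local system $N$ on $S^1$ with monodromy $\phi$ one has $H^0(S^1, N) = \ker(\phi - I)$ and $H^1(S^1, N) = \operatorname{coker}(\phi - I)$, both vanishing precisely when $\phi - I$ is invertible.

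Combining these observations, $H^*(U, L_\lambda) \neq 0$ if and only if $1$ is an eigenvalue of $\lambda \cdot T_q$ for some $q$, equivalently if and only if $\lambda^{-1}$ is an eigenvalue of $T_q$ on some $H^q(F, \bC)$. Since the eigenvalues of the Milnor monodromy are roots of unity, they are closed under inversion, so the latter condition is the same as $\lambda$ itself being an eigenvalue; hence $(f^*)^{-1}(\cV(U))$ equals the set of eigenvalues of the monodromy on the cohomology of the Milnor fiber. The main technical point is the invocation of the Milnor fibration, i.e.\ verifying that $\widetilde U \hookrightarrow U$ is a homotopy equivalence and that $f$ realizes $\widetilde U$ as a genuine fibration over $D_\delta^*$ with fiber $F$; granted this classical fact, the rest is a routine Leray (or equivalently Wang) sequence calculation.
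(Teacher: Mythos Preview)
Your argument via the Milnor fibration and the Wang/Leray sequence is correct and is, in fact, the more elementary route the paper alludes to when it cites \cite[Corollary 6.4.9]{Di} and \cite[Proposition 6.6]{LM}. The paper's own proof takes a different path: it factors $f=\prod_i f_i$ into irreducible germs and invokes the machinery of Sabbah's multivariable specialization complex $\psi_F\bC$ for $F=(f_1,\ldots,f_r)$, together with results from \cite{B} identifying $\Supp_0(\psi_F\bC)$ with $\cV(U)$ (in the reduced case) and relating $\Supp_0(\psi_f\bC)$, which is the eigenvalue set, to $\Supp_0(\psi_F\bC)$ via the diagonal $a:\bC^*\to(\bC^*)^r$. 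That approach is heavier but places the proposition inside the framework the paper needs for Theorem \ref{thrmSab}; yours is self-contained and transparent.

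One small gap: your sentence ``since the eigenvalues of the Milnor monodromy are roots of unity, they are closed under inversion'' is not a valid inference as stated---an arbitrary finite set of roots of unity need not be closed under inversion. What you actually want is that $T_q$ acts on $H^q(F,\bZ)$, so its characteristic polynomial lies in $\bZ[t]$ and the set of eigenvalues is stable under complex conjugation; combined with $|\zeta|=1$ this gives $\bar\zeta=\zeta^{-1}$. Alternatively, one can track conventions so that the monodromy of $R^qf_*\bC_U$ as a local system is $T_q^{-1}$ rather than $T_q$, and then no inversion step is needed at all.
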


This proposition is not new and it can be deduced from a more precise result, see \cite[Corollary 6.4.9]{Di}, or from the results in \cite{B}, as we will explain. It has also recently appeared as \cite[Proposition 6.6]{LM}.

A different attempt to generalize statements about the Milnor monodromy has been introduced by C. Sabbah \cite{Sab}. Given a collection $F=(f_1,\ldots,f_r)$ of germs of holomorphic functions $f_j:(\bC^n,0)\ra (\bC,0)$, he has defined a complex $\psi_F\bC$ with $A$-constructible cohomology on $\cap_j f^{-1}_j(0)$, where $A=\bC[t_1^{\pm},\ldots,t_r^{\pm}]$ is the affine coordinate ring of $(\bC^*)^r$. This complex is the analog of Deligne's nearby cycles complex $\psi_f\bC$ for the case $r=1$. While $\psi_f\bC$ governs the Milnor monodromy information, Sabbah's $\psi_F\bC$ governs  the more general Alexander-type invariants, see {\it loc. cit}. One of the main results of \cite{Sab} is about the support in $(\bC^*)^r$ of the stalk of $\psi_F\bC$ at the origin given by the $A$-module structure, denoted 
$$
\Supp_0(\psi_F\bC).
$$
$
\Supp_0(\psi_F\bC)
$ is shown to included in a hypersurface whose irreducible components are translated subtori. Nicaise \cite{Ni} showed further that the irreducible components of this hypersurface are translated by torsion points. As a corollary of Theorem \ref{thrmMain0} and the results in \cite{B}, one has the more general statement \footnote{This was stated as  Theorem 2 in \cite{B}, but the proof depended on \cite{Li-gap}.  \cite{Li-gap} turned out to have serious flaws in the proofs, see the comments below.}:

\begin{thrm}\label{thrmSab}
Let $F=(f_1,\ldots,f_r)$ be a collection of germs of holomorphic functions $f_i:(\bC^n,0)\ra (\bC,0)$. Then every irreducible component of $\Supp_0(\psi_F\bC)$ is a torsion translated subtorus of $(\bC^*)^r$.
\end{thrm}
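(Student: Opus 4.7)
The plan is to reduce Theorem \ref{thrmSab} to Theorem \ref{thrmMain0} through the interpretation, already present in \cite{B}, of Sabbah's multivariable nearby cycles in terms of cohomology support loci of a suitable small ball complement; the jump from $r=1$ in Proposition \ref{propEig} to general $r$ is cosmetic once that dictionary is in place.

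First I would set $\sX=\bigcup_{i=1}^{r}f_i^{-1}(0)\subset(\bC^n,0)$ and let $U$ be the small ball complement of the germ $(\sX,0)$. By construction $F=(f_1,\ldots,f_r)$ restricts to a holomorphic map
$$
F|_U\colon U\longrightarrow(\bC^*)^r,
$$
and pullback of rank one local systems induces an algebraic homomorphism of tori $F^*\colon(\bC^*)^r=\mb((\bC^*)^r)\to\mb(U)$ which factors through the identity component $\mb(U)_{\bone}\cong(\bC^*)^{b_1(U)}$. Next I would invoke the multivariable analogue of Proposition \ref{propEig} proved in \cite{B}, namely the identification
$$
\Supp_0(\psi_F\bC)=(F^*)^{-1}\bigl(\cV(U)\bigr).
$$
This is the input that, combined with Theorem \ref{thrmMain0}, reinstates what was stated as Theorem~2 in \cite{B} and previously relied on the flawed \cite{Li-gap}.

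Given this identification, the conclusion follows in two clean steps. Theorem \ref{thrmMain0} says that every irreducible component of $\cV(U)=\bigcup_i\cV^i_1(U)$ is a torsion translate of a subtorus of $\mb(U)$. Then I would apply the elementary torus-theoretic fact that for any algebraic homomorphism $\varphi\colon T_1\to T_2$ of complex tori, the preimage $\varphi^{-1}(\zeta\cdot S)$ of a torsion translate of a subtorus $S\subset T_2$ is a finite union of torsion translates of subtori of $T_1$: the kernel of $\varphi$ is a closed subgroup whose identity component is a subtorus and whose component group is finite, each such coset admits a torsion representative because a torus is divisible, and a torsion lift of $\zeta$ (when the fiber is non-empty) can be chosen because torsion points are dense in every fiber of a surjection of tori. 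Applying this with $\varphi=F^*$ converts the torus-translate decomposition of $\cV(U)$ into the sought-after decomposition of $\Supp_0(\psi_F\bC)$.

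The main obstacle I anticipate is the sheaf-theoretic comparison underlying the identification $\Supp_0(\psi_F\bC)=(F^*)^{-1}(\cV(U))$. Sabbah's $\psi_F\bC$ is defined via the multivariable nearby cycle construction on the universal abelian cover of $U$ associated to $F$, and matching its stalk $A$-module support at the origin with the jump loci of the direct image of the constant sheaf on $U$ is where all the real work sits; this is precisely the content of the relevant results in \cite{B}. Once that comparison is granted, Theorem \ref{thrmMain0} supplies the missing geometric input previously sought in \cite{Li-gap}, and everything else is formal.
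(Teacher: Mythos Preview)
Your approach is essentially the same as the paper's. Both reduce to the identification from \cite{B} between $\Supp_0(\psi_F\bC)$ and (a pullback of) $\cV(U)$ for the small ball complement $U$ of the zero locus, and then invoke the main structure theorem. The only organizational difference is that the paper first uses \cite[Lemma~3.34 and Proposition~3.31]{B} to reduce to the case where the $f_i$ are the mutually distinct irreducible analytic branches of a reduced germ, so that $F^*$ is an \emph{isomorphism} and \cite[Theorem~4]{B} gives $\Supp_0(\psi_F\bC)=\cV(U)$ on the nose; Theorem~\ref{thrmMain} then finishes immediately with no torus-preimage argument needed. You instead keep the general $F$ and close with the elementary fact that preimages of torsion-translated subtori under a homomorphism of tori are finite unions of torsion-translated subtori; this is exactly what is hidden inside the paper's ``reduction'' step.
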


In a subsequent article \cite{BLSW} with Y. Liu and L. Saumell, we show furthermore  that $\Supp_0(\psi_F\bC)$ is a hypersurface. 

The proof of Theorem \ref{thrmMain0} is reduced to the case of an analytic hypersurface germ by a result of J. Fern\'andez de Bobadilla \cite{FdB2}, see Theorem \ref{thrmBo}. Let $f: (\cc^n, 0)\to (\cc, 0)$ be a germ of  a holomorphic function and $\sX=f^{-1}(0)$. We call $U=B\setminus\sX$ the small ball complement of $f$. Since we will be only interested in the topology of $U$, we will assume $f$ has no irreducible factor with multiplicity higher than one. In other words, we assume that $f=\prod_{i=1}^rf_i$ where $f_i$ are distinct (up to multiplication by invertible holomorphic function germs) irreducible germs of holomorphic functions. Then $H_1(U, \zz)\cong \zz^r$ is generated by small loops around $\{f_i=0\}$ at a general point. Thus, 
$$\mb(U)\cong H^1(U, \cc^*)\cong (\cc^*)^r.$$
Given any $\lam\in (\cc^*)^r\in \mb(U)$, denote the corresponding local system by $L_\lam$.

The case of Theorem \ref{thrmMain0} for the small ball complement of the germ of a holomorphic function is stated erroneously as a theorem in \cite{Li-gap}. See \cite[Section 8]{BW-surv} for a discussion about the flawed argument appearing in the proof of the crucial \cite[Proposition 3.5]{Li-gap}. The proof in \cite{Li-gap} is unrepairable in our opinion.  A simple test that reveals major subtleties of the problem, and which \cite{Li-gap} fails, is the following: how does one show that an isolated point of $\cV^i_k(U)$ must be unitary, or, stronger, torsion? Isolated points are well-known to occur. $L_2$-techniques and Hodge theory can be employed to show that a unitary isolated point is torsion, but these tools, at least in the form quoted in \cite{Li-gap}, are not available without the unitarity assumption.


We sketch now the main steps of the proof of  the hypersurface case of Theorem \ref{thrmMain0}:

\begin{enumerate}[label=\it{Step \arabic*:}]
\item \label{step1} We reduce  the problem to the case of all $f_i$ being  polynomials defined over $\bar{\qq}$. This step uses a result due to S. Izumi \cite{I} based on the Artin Approximation Theorem, and a new result of M. Bilski, A. Parusi\'nski, and G. Rond \cite{BPR} generalizing a theorem of J. Fern\'andez de Bobadilla \cite{FdB}.

\item Using  the $\cD$-module $\mathcal{D}[s]f^s$, where $s=(s_1, \ldots, s_r)$ and $f^s=f_1^{s_1}\cdots f_r^{s_r}$, we define set-theoretically some cohomology jump loci $\Sigma^i_k(f)$ as subsets of $\cc^r$. We also prove $\Sigma^i_k(f)$ is preserved by the action of $Gal(\cc/\bar{\qq})$. 
\item \label{step3} Given any point $\lambda \in \mb(U)$, we prove that there exists a small open ball $N\subset \cc^r$  biholomorphic with an open neighborhood $\Exp(N)\subset (\cc^*)^r$ of  $\lam$,
and such that $\Exp(N\cap \Sigma^i_k(f))=\Exp(N)\cap \sV^i_k(U)$.  This step uses the Riemann-Hilbert correspondence between regular holonomic $\cD$-modules and perverse sheaves.
\item Using Step 3 and the fact that both $\Sigma^i_k(f)$ and $\sV^i_k(U)$ are preserved by $Gal(\cc/\bar{\qq})$, we prove that each irreducible component of $\sV^i_k(U)$ is a torsion translate of subtorus. This is result, Theorem \ref{goodexpo}, might be of independent interest. It is a generalization of a classical theorem of Gelfond and Schneider, which says that if $\alpha$ and $e^{2\pi i\alpha}$ are both algebraic numbers, then $\alpha\in \qq$. Theorem \ref{goodexpo} is proved using  a strong theorem of M. Laurent \cite{mm}.
\end{enumerate}

Section 2 deals with the first step. Section 3 contains the second and third step. Section 4 is devoted to the last step. Section 5 finishes the proofs for all the results in this introduction. Section 6 considers the case of  small ball complements in a singular ambient space.

The word germ in this paper will refer to complex analytic germs only.



{\bf Acknowledgement.}  We would like to thank H. Hauser, Y. Liu, M. Saito, and U. Walther for answering some of our questions, and to A. Dimca for pointing out the references in connection with Theorem \ref{corInv} and Proposition \ref{propEig}. The first author thanks S. Zucker and the Department of Mathematics of the Johns Hopkins University for the hospitality during the writing of this article.  The first author was partly sponsored by FWO, a KU Leuven OT grant, and a Flemish Methusalem grant.

\section{Deformations of  hypersurface singularities}\label{Def}

The goal of this section is to prove:

\begin{theorem}\label{Qbar2}
Let $f: (\cc^n, 0)\to (\cc, 0)$ be the germ of a holomorphic function defining the germ of a reduced analytic hypersurface. Let $r$ be the number of analytic branches of $f$. Then there exist irreducible polynomial functions $h_i:\cc^n\to \cc$ $(i=1,\ldots, r)$ defined over $\bar{\qq}$ such that, letting $h=\prod_{i=1}^rh_i$, we have:

\begin{enumerate}

\item[(1)] The germ of $h$ at $0$ defines a reduced analytic hypersurface germ, and $h=\prod_{i=1}^rh_i$ is  the irreducible decomposition (up to multiplication by invertible holomorphic germs) of the germ of $h$ at $0$.

\item[(2)] If $B$ is a small enough ball centered at $0$ in $\bC^n$, then $B\setminus f^{-1}(0)$ is homotopy equivalent to $B\setminus h^{-1}(0)$.

\item[(3)] If we set $W=\bC^n\setminus h^{-1}(0)$, the map
$$
\mb(W)\lra \mb(B\setminus h^{-1}(0))
$$
induced by the restriction map is an isomorphism. 
\end{enumerate}
\end{theorem}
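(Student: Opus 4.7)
The plan is to approximate the germ $f$ by a polynomial $h$ defined over $\bar{\qq}$ in such a way that the topological type of the hypersurface germ at $0$ and the factorization into branches are both preserved, and then to verify that the rank-one character variety is insensitive to the global-versus-local distinction for the resulting $h$. The three named ingredients assemble naturally: Izumi's inequality gives the effective comparison between $\mathfrak{m}$-adic order and geometric orders that governs how close the approximation must be; the strengthened Fern\'andez de Bobadilla theorem, as extended by Bilski-Parusi\'nski-Rond, then provides a polynomial approximant with $\bar{\qq}$-coefficients that is both topologically equivalent to $f$ near $0$ and compatible with the branch decomposition.

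To produce $h$, I would first write $f=\prod_{i=1}^r f_i$ as its decomposition into pairwise distinct irreducible analytic branches at $0$. Applying Bilski-Parusi\'nski-Rond to this factorization, for every integer $N$ one obtains polynomials $h_{i,N}\in \bar{\qq}[x_1,\ldots,x_n]$, each simultaneously irreducible as a polynomial and irreducible as an analytic germ at $0$, pairwise non-associated as germs, and with $h_{i,N}\equiv f_i \bmod \mathfrak{m}^N$. Izumi's theorem guarantees that the threshold $N_0$ beyond which the approximation is strong enough to invoke topological equisingularity is effectively controlled by invariants of $f$ alone. For $N\ge N_0$ the polynomial $h:=\prod_i h_{i,N}$ then satisfies (1), and there is a homeomorphism of pairs $(B,f^{-1}(0)\cap B)\cong (B,h^{-1}(0)\cap B)$ for small enough $B$. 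Restricting this homeomorphism to complements yields (2).

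For (3), I would invoke the classical fact that for the complement $W=\bC^n\setminus h^{-1}(0)$ of a reduced algebraic hypersurface, $H_1(W,\zz)$ is free abelian with a basis of meridian loops, one around each globally irreducible component of $h^{-1}(0)$; and, as recalled in the introduction of the paper, $H_1(B\setminus h^{-1}(0),\zz)\cong\zz^r$ is freely generated by local meridians around the analytic branches of $h$ at $0$. By our choice of $h$, global irreducible components of $h^{-1}(0)$ are in bijection with local branches at $0$ via $h_i\leftrightarrow \{h_i=0\}$, and the inclusion $B\setminus h^{-1}(0)\hookrightarrow W$ sends each local meridian to the corresponding global meridian. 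Thus the induced map on $H_1$ is an isomorphism, and consequently so is the restriction map $\mb(W)\to\mb(B\setminus h^{-1}(0))$.

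The main obstacle is the joint irreducibility requirement in the construction of $h$: each factor $h_i$ must be simultaneously globally irreducible as a polynomial over $\bar{\qq}$ and locally irreducible at $0$, with the germ-decomposition of $h$ matching that of $f$ one-to-one. A blind polynomial approximation of $f$ would respect neither the polynomial factorization nor the analytic branch decomposition, and splitting a polynomial approximant into its polynomial irreducible factors could easily produce factors with several analytic branches at $0$, which would break (3). The Bilski-Parusi\'nski-Rond theorem is precisely the analytic input engineered to circumvent this, while Izumi's theorem is what makes the required proximity threshold uniform across all branches.
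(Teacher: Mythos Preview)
Your argument for part~(3) is fine and essentially matches the paper's. The problem lies in the construction of $h$.

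You invoke Bilski--Parusi\'nski--Rond as if it supplied, for each $N$, polynomials $h_{i,N}\in\bar\qq[x_1,\dots,x_n]$ with $h_{i,N}\equiv f_i\bmod\mathfrak m^N$, and you invoke Izumi as furnishing a threshold $N_0$ beyond which $\mathfrak m$-adic proximity forces topological equivalence of the hypersurface germs. Neither input does what you claim. The BPR theorem used in the paper asserts that an analytic germ $f$ becomes a \emph{complex} polynomial after composition with a local homeomorphism of $(\bC^n,0)$; it is a topological-algebraicity statement, not an $\mathfrak m$-adic approximation result, and it says nothing about $\bar\qq$-coefficients. Izumi's result, as used here, is not an equisingularity bound at all: it is a factorization statement (via Artin approximation in an excellent Henselian ring) showing that if a polynomial factors in $\bC[[x]]$ then it already factors, up to units, into algebraic power series. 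So the mechanism you describe---$\bar\qq$-approximation modulo $\mathfrak m^N$ together with a uniform topological-triviality threshold---is not supported by the cited tools, and the crucial joint local/global irreducibility conclusion for the $h_i$ has no argument behind it.

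In the paper the three difficulties are handled separately. First, BPR replaces $f$ by a complex polynomial $g$ with the same small-ball complement (via a homeomorphism, not an approximation). Second, Izumi shows the analytic branches of $g$ are algebraic power series up to units, and a second BPR result, applied simultaneously to the whole family of branches, turns them into complex polynomials $\hat h_i$ after a single diffeomorphism; this is how branch-by-branch control is obtained. Third---and this is a step with no analogue in your sketch---the $\hat h_i$ are deformed to $\bar\qq$-polynomials $h_i$ by spreading out a common log resolution of $(\bC^n,\hat h^{-1}(0))$ over a finitely generated ring $R\subset\bC$ and specializing to a nearby $\bar\qq$-point of $\spec R$, where the stratified homeomorphism type of the resolution (hence of the pair) is locally constant; local irreducibility of each $h_i$ is then read off from a connectedness criterion on the resolution. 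Your proposal collapses these three independent arguments into a single invocation of BPR that the theorem does not license.
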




First, we deform $f$ to a polynomial defined over $\cc$. This follows from a beautiful result of Bilski-Parusi\'nski-Rond \cite{BPR}, which generalizes a theorem of J. Fern\'andez de Bobadilla \cite{FdB}:

\begin{theorem}\label{thrmGerm}\cite[Theorem 1.2]{BPR}
Let $\kk=\cc$ or $\rr$. Let $f: (\kk^n, 0)\to (\kk, 0)$ be an analytic function germ. Then there is a homeomorphism $\sigma: (\kk^n, 0)\to (\kk^n, 0)$ such that $f\circ \sigma$ is the germ of a polynomial. 
\end{theorem}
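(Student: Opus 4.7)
The plan is to realise $\sigma$ as the time-one flow of a controlled vector field trivializing a linear interpolation between $f$ and an algebraic approximant, in the style of Thom--Levine. Thus the first step is algebraic approximation: apply Artin approximation for the ring $\kk\{x_1,\ldots,x_n\}$ of convergent power series (over $\cc$ one could alternatively invoke Grauert; for $\kk=\rr$ Artin's original theorem suffices) to produce, for any integer $N$, a polynomial $g\in\kk[x_1,\ldots,x_n]$ with $f-g\in\mathfrak{m}_0^{N}$, where $\mathfrak{m}_0$ is the maximal ideal at the origin. The real content is then a \emph{topological right-finite-determinacy} statement: there exists an integer $N_0=N_0(f)$ such that any germ $\tilde g$ with $f-\tilde g\in\mathfrak{m}_0^{N_0}$ is topologically right-equivalent to $f$. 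Applying this with $N\ge N_0$ to the polynomial $g$ from the approximation step immediately gives the theorem with $\sigma$ the relevant homeomorphism.

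To establish the finite-determinacy statement, consider the one-parameter family
\[
F(x,t)=f(x)+t\bigl(\tilde g(x)-f(x)\bigr),\qquad (x,t)\in(\kk^n,0)\times[0,1],
\]
and look for a continuous vector field of the form $V=\partial_t+\sum_i W_i(x,t)\,\partial_{x_i}$, defined near $\{0\}\times[0,1]$, whose flow exists and is unique, which preserves $\{0\}\times[0,1]$, and which satisfies $V(F)=0$. The last condition is the functional equation
\[
\sum_i W_i\,\partial_{x_i}F \;=\; f-\tilde g,
\]
a quantitative division problem: divide the small perturbation $f-\tilde g$ by the gradient of $F$ with enough regularity to integrate the resulting vector field.

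The hard part is solving this division problem near the, possibly non-isolated, critical locus of $F$; I expect this to be the main obstacle. The strategy is a Łojasiewicz/stratified argument: equip the zero locus of $\partial_x F$ together with $F^{-1}(0)$ with a Whitney or Thom $a_F$-regular stratification varying nicely in $t$, choose a subordinate partition of unity, and on each stratum bound $|f-\tilde g|$ by a suitable power of $|\nabla_x F|$ using a Łojasiewicz inequality whose exponent depends only on $f$. Taking $N_0$ larger than this exponent forces the quotient $W_i$ to be continuous and to vanish along $\{0\}\times[0,1]$ with a modulus of continuity good enough to guarantee existence and uniqueness of integral curves (in the Kurdyka--Orro--Simon / Valette regime for integration of vector fields with merely continuous coefficients). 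This is precisely where the generality beyond Fern\'andez de Bobadilla's earlier isolated-singularity result requires the refined stratified Łojasiewicz machinery of \cite{BPR}.

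Finally, integrating $V$ from $t=0$ to $t=1$ on a sufficiently small ball yields a germ of homeomorphism $\Phi:(\kk^n,0)\to(\kk^n,0)$ with $F(\Phi(x),1)=F(x,0)$, i.e.\ $\tilde g\circ\Phi=f$; setting $\sigma=\Phi^{-1}$ gives $f\circ\sigma=\tilde g$, a polynomial, as required. Putting the two ingredients together --- polynomial approximation to arbitrarily high order, plus topological triviality in the interpolating family for order $N\ge N_0(f)$ --- completes the proof.
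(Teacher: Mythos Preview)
The paper does not prove this theorem: it is quoted verbatim from \cite[Theorem 1.2]{BPR} and used as a black box to deduce Corollary \ref{polynomial}. There is therefore no proof in the present paper against which to compare your proposal.

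For what it is worth, your sketch is a reasonable outline of a topological finite-determinacy strategy (Artin approximation followed by Thom--Levine trivialization controlled by \L ojasiewicz-type estimates), but it is not the route taken in \cite{BPR}. There the authors use P\l oski's parametrized Artin approximation to place $f$ in an algebraic family together with a Nash function, and then obtain topological triviality of that family via Zariski equisingularity and Varchenko's theorem, rather than by directly solving the division problem and integrating a controlled vector field. The step you single out as the main obstacle --- dividing $f-\tilde g$ by $\nabla_x F$ with enough regularity near a possibly non-isolated critical locus --- is exactly what the equisingularity machinery of \cite{BPR} is designed to circumvent; carrying out your approach in full generality would amount to reproving a substantial part of that paper by other means.
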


Indeed, this theorem implies the following corollary. 

\begin{cor}\label{polynomial}
Let $f: (\cc^n, 0)\to (\cc, 0)$ be a holomorphic function germ. There exists a polynomial function germ $g: (\cc^n,0)\to (\cc, 0)$ such that the small ball complement of $f$ has the same homotopy type as the small ball complement of $g$. 
\end{cor}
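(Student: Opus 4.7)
The plan is to deduce the corollary as a direct consequence of Theorem \ref{thrmGerm}. Apply that theorem with $\kk=\cc$ to produce a germ homeomorphism $\sigma:(\cc^n,0)\to(\cc^n,0)$ such that $g:=f\circ\sigma$ is the germ of a polynomial function; fix a polynomial representative $g:\cc^n\to\cc$. Choose a concrete representative of $\sigma$: open neighborhoods $V_1,V_2$ of $0$ in $\cc^n$ and a homeomorphism $\sigma:V_1\xrightarrow{\sim}V_2$ with $\sigma(0)=0$ and $f\circ\sigma=g$ on $V_1$. Then $\sigma$ restricts to a homeomorphism of germ complements
$$
V_1\setminus g^{-1}(0)\;\xrightarrow{\sim}\;V_2\setminus f^{-1}(0).
$$

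Next, I would pick an open ball $B\subset V_2$ small enough that $B\setminus f^{-1}(0)$ represents the small ball complement of $f$, and an open ball $B'\subset\sigma^{-1}(B)\subset V_1$ small enough that $B'\setminus g^{-1}(0)$ represents the small ball complement of $g$. Via the restriction of $\sigma$ we obtain a homeomorphism
$$
\sigma^{-1}(B)\setminus g^{-1}(0)\;\xrightarrow{\sim}\;B\setminus f^{-1}(0),
$$
so the corollary reduces to showing that the inclusion $B'\setminus g^{-1}(0)\hookrightarrow \sigma^{-1}(B)\setminus g^{-1}(0)$ is a homotopy equivalence.

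This last statement is the only nontrivial point, and hence the main (though very minor) obstacle. It holds by the local conic structure of the analytic hypersurface $g^{-1}(0)$: any open neighborhood of $0$ sandwiched between two sufficiently small concentric balls has complement of $g^{-1}(0)$ of the homotopy type of the small ball complement of $g$. Sandwiching $\sigma^{-1}(B)$ between $B'$ and a slightly larger concentric ball inside $V_1$ yields the desired homotopy equivalence. The care is needed precisely because $\sigma^{-1}(B)$ is merely homeomorphic to a ball through $\sigma$ and not a ball in its own right; apart from this standard local point, the argument is a transparent transfer of homotopy type through $\sigma$.
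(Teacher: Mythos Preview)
Your approach is the same as the paper's: apply Theorem~\ref{thrmGerm} to get $g=f\circ\sigma$, then argue that the image under $\sigma$ of a small ball is, for homotopy purposes, as good as an actual small ball. The paper packages this last step by quoting Prill's theory of \emph{good neighborhoods} \cite{pr}: small balls are good neighborhoods, good neighborhoods are preserved by homeomorphisms, and any two good neighborhoods of the same point give homotopy-equivalent complements.

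Your version of this step has a small gap. From $B'\subset\sigma^{-1}(B)\subset B''$ and the fact that $B'\setminus g^{-1}(0)\hookrightarrow B''\setminus g^{-1}(0)$ is a homotopy equivalence, it does \emph{not} follow formally that $B'\setminus g^{-1}(0)\hookrightarrow \sigma^{-1}(B)\setminus g^{-1}(0)$ is one: a single sandwich does not force the intermediate inclusion to be an equivalence, and the radial conic retraction of $B''$ onto $B'$ need not preserve the arbitrary open set $\sigma^{-1}(B)$. The easy fix is a second sandwich: choose a smaller ball $\tilde B\subset B$ with $\sigma^{-1}(\tilde B)\subset B'$, so that
\[
\sigma^{-1}(\tilde B)\;\subset\;B'\;\subset\;\sigma^{-1}(B)\;\subset\;B''.
\]
Now the conic structure of $f$ (transported through $\sigma$) makes $\sigma^{-1}(\tilde B)\setminus g^{-1}(0)\hookrightarrow\sigma^{-1}(B)\setminus g^{-1}(0)$ a homotopy equivalence, and the conic structure of $g$ makes $B'\setminus g^{-1}(0)\hookrightarrow B''\setminus g^{-1}(0)$ one; a standard four-term interleaving argument on homotopy groups plus Whitehead then gives that all four inclusions are equivalences. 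Alternatively, just cite Prill as the paper does.
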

\begin{proof}
Note that small ball complements of analytic hypersurfaces are defined uniquely only up to homotopy type. Let $\sigma$ be the local homeomorphism from Theorem \ref{thrmGerm} and let $g=f\circ \sigma$. Let $B\subset \bC^n$ be a small enough ball centered at $0$. Note that $\sigma (B)$ is not necessarily a ball anymore. In any case, we can use the theory of good neighborhoods of D. Prill \cite{pr}. A good neighborhood of a point $y$ in a topological space $X$ with respect to a subspace $Y$ is an open neighborhood $U$ of $y$ in $X$ such that: there exists a collection $\{U_a\}_{a\in A}$ as a basis of neighborhoods of $y$, with $U_a\setminus Y$ a deformation retract of $U\setminus Y$ for all $a\in A$. It is proven in {\it loc. cit.} that:
\begin{itemize}
\item $U_a$ are also good neighborhoods of $y$ in $X$ with respect to $Y$;
\medskip
\item the homotopy type of $U\setminus Y$ depends only on the triple $\{X,Y,y\}$;
\medskip
\item good neighborhoods exist for triples $\{X,Y, y\}$ where $X$ is an analytic variety and $Y$ a subvariety.
\end{itemize}
In particular, $B$ is a good neighborhood of $0$ in $\bC^n$ with respect to $g^{-1}(0)$ if $B$ is small enough. Since $\sigma$ is a local homeomorphism, $\sigma(B)$ is also a good neighborhood of $0$ in $\bC^n$ with respect to $f^{-1}(0)$ since the conditions are satisfied for $\{\sigma (U_a)\}_{a\in A}$. In particular, $\sigma(B\setminus g^{-1}(0))$ has the homotopy type of a small ball complement of $f^{-1}(0)$.
\end{proof}

\begin{prop}\label{irreduciblepolynomial}
Given a polynomial $g:\cc^n\to \cc$, let $r$ be the number of analytic branches of the reduced germ underlying  the germ of $g$ at $0$. Then there exist irreducible polynomials $\hat{h}_i: \cc^n\to \cc$ $(i=1,\ldots, r)$ such that, letting $\hat{h}=\prod_{i=1}^r\hat{h}_i$, we have:
\begin{itemize}

\item[(1)] The germ of $\hat{h}$ at $0$ defines a reduced analytic hypersurface, and $\hat{h}=\prod_{i=1}^r\hat{h}_i$ is  the irreducible decomposition (up to multiplication by invertible holomorphic germs) of the germ of $\hat{h}$ at $0$.

\item[(2)] If $B$ is a small enough ball centered at $0$ in $\bC^n$, then $B\setminus g^{-1}(0)$ is homotopy equivalent to $B\setminus \hat{h}^{-1}(0)$.
\end{itemize}

\end{prop}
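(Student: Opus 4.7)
The plan is to decompose the germ of $g$ at the origin into its analytic irreducible factors, replace each factor by a carefully chosen irreducible polynomial, and then argue topological triviality of the deformation of hypersurfaces obtained in this way.

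First, in the local ring $\sO_{\cc^n,0}$, write
\[
g = u \cdot g_1^{a_1} \cdots g_r^{a_r},
\]
where $u$ is a unit and the $g_i$ are pairwise distinct irreducible analytic germs. Since $B\setminus g^{-1}(0)$ depends only on the underlying set $g^{-1}(0)$, I may replace $g$ by the reduced germ $g_1\cdots g_r$ for the purposes of item (2). The task is then to produce, for each branch $g_i$, an irreducible polynomial $\hat h_i$ whose germ at $0$ is analytically irreducible and sufficiently close to $g_i$ that the small ball complements agree up to homotopy.

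Next, I approximate each $g_i$ by a polynomial $\tilde g_i \in \cc[x_1,\dots,x_n]$ (for instance by truncating its Taylor expansion) with $\tilde g_i \equiv g_i \pmod{\mathfrak{m}^N}$, where $N$ is to be chosen large. By Izumi's theorem, which provides a linear comparison between the $\mathfrak{m}$-adic order on $\sO_{\cc^n,0}$ and the valuation along an analytically irreducible germ, each $\tilde g_i$ remains analytically irreducible at $0$ and the $\tilde g_i$ remain pairwise coprime in $\sO_{\cc^n,0}$ provided $N$ is large enough. Now $\tilde g_i$ need not itself be irreducible as a polynomial, but exactly one of its polynomial irreducible factors vanishes at $0$ (the others being units in $\sO_{\cc^n,0}$); let $\hat h_i$ be that factor. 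Setting $\hat h = \prod_{i=1}^r \hat h_i$, the analytically irreducible germ of $\hat h_i$ at $0$ coincides with that of $\tilde g_i$ up to a unit, so $\hat h = \prod \hat h_i$ is the reduced analytic irreducible decomposition of $\hat h$ at $0$ up to a unit, establishing item~(1).

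For item (2), I interpolate from $g_1\cdots g_r$ to $\hat h_1\cdots \hat h_r$ by the one-parameter family
\[
F_t = (1-t)\,g_1\cdots g_r + t\,\hat h_1\cdots \hat h_r, \qquad t\in[0,1].
\]
By the Izumi-controlled agreement of $\hat h_i$ with $g_i$ modulo $\mathfrak{m}^N$, a topological equisingularity theorem in the spirit of Fern\'andez de Bobadilla and Bilski--Parusi\'nski--Rond applies to this family and shows that the small ball complement of $F_t^{-1}(0)$ has constant homotopy type in $t$. Specialising to $t=0$ and $t=1$ yields item~(2). The main obstacle is this last step: since the singular locus of $g^{-1}(0)$ need not be isolated, one must verify that no branches coincide or split, and that no new components appear along the deformation. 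It is precisely to handle this that the Izumi bound on the order of approximation is combined with the BPR version of Fern\'andez de Bobadilla's theorem, which supplies the desired topological equisingularity under high-order agreement.
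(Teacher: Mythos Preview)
Your approach has two genuine gaps. First, the role you assign to Izumi's theorem is not the one it plays here: the relevant result (Izumi, via Artin approximation) says that if a \emph{polynomial} $g$ factors as $\prod g_i$ in the formal power series ring, then it factors as $g = \prod \hat g_i$ with each $\hat g_i$ an \emph{algebraic power series} satisfying $\hat g_i = u_i g_i$ for formal units $u_i$. It does not assert that a high-order polynomial truncation of an analytically irreducible germ remains irreducible, and the Izumi inequality on orders that you invoke does not give this either. Second, and more seriously, your interpolation argument for item~(2) is not supported by the results you cite. Neither Fern\'andez de Bobadilla's theorem nor its BPR extension asserts that a germ and a high-order perturbation of it have homotopy-equivalent small ball complements; for non-isolated singularities this is precisely the delicate point, and the nontriviality of BPR's proof reflects that naive truncation need not preserve the topological type. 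Your last paragraph acknowledges the difficulty but then resolves it by appeal to a ``BPR version of Fern\'andez de Bobadilla's theorem'' that is not what those papers prove.

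The paper's route sidesteps both issues. It uses Izumi to factor $g=\prod_i \hat g_i$ with $\hat g_i$ algebraic power series (an \emph{exact} factorization, not an approximation), and then applies BPR's Theorem~5.4: for finitely many algebraic power series there exist a local diffeomorphism $\sigma:(\cc^n,0)\to(\cc^n,0)$ and analytic units $v_i$ such that each $\hat h_i(x)=v_i(x)\,\hat g_i(\sigma(x))$ is a polynomial germ. The small ball complement is then transported through the diffeomorphism $\sigma$ (and the unit) via Prill's good-neighborhood argument---no one-parameter family, no equisingularity claim. Analytic irreducibility of each $\hat h_i$ as a germ is not checked directly but read off a posteriori from $H^1(B\setminus \hat h^{-1}(0),\zz)\cong\zz^r$, which forces the $r$ factors to be the analytic branches. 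In short: replace ``truncate, then interpolate and hope for equisingularity'' by ``pass to algebraic power series via Izumi, then straighten to polynomials via the BPR diffeomorphism''.
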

\begin{proof}

Note that once everything else  is achieved, the condition that $\hat{h}_i$ are irreducible is easy to satisfy by throwing away the polynomial factors not passing through the origin. Hence we will not worry about this condition from now on.

The first step was suggested to us by Morihiko Saito. Let $\prod_{i=1}^{r'}g_i$ be an irreducible decomposition of the germ of $g$ at $0$. Note that $r'\ge r$, with equality if and only if the germ of $g$ at $0$ is reduced. Relabel the $g_i$ such that the first $r$ of them define the $r$ analytic branches of the reduced analytic germ underlying the germ of $g$. Thus $g$ and $\prod_{i=1}^rg_i$ have the same small ball complement at the origin.

Instead of holomorphic function germs, we will consider $g_i$ as converging power series. Recall that an algebraic power series $\phi$ is a formal power series satisfying some non-trivial polynomial equation $P(\phi)=0$, where $P(T)\in \bC[x_1,\ldots,x_n][T]$. The ring of algebraic power series, which we denote by $\bC\langle x_1,\ldots x_n \rangle$, can also be described as the local ring at $0$  of regular functions in the \'etale topology of $X=\bC^n=Spec\; \bC[x_1,\ldots,x_n]$, or in other words,
$$
\bC\langle x_1,\ldots x_n \rangle =\varinjlim \Gamma (V,\cO_V),
$$
where the limit is over  \'etale neighborhoods $V\ra X$ of $0$. See \cite[I.4]{M} for details. 

Local analytic branches are \'etale locally algebraic. More precisely, by Artin Approximation Theorem \cite{A} and a corollary of it pointed out by Izumi \cite[Theorem E]{I}, see also \cite[Corollary 4.0.16]{HR}, if $g\in\bC[x_1,\ldots, x_n]$ splits as a product of formal power series $g=\prod_{i=1}^{r'} g_i$ with $g_i\in\bC[[x_1,\ldots,x_n]]$, then there exists a splitting $g=\prod_{i=1}^{r'}\hat{g}_i$ with $\hat{g}_i$ being algebraic power series and such that $\hat{g}_i=u_i g_i$ for some invertible formal power series $u_i$. Here we recall that the local ring $\bC\langle x_1,\ldots x_n \rangle$ is excellent Henselian, and hence the theorem of Izumi applies.



We apply next the following to the germs of $g_i$ at the origin: 

\begin{thrm} {\rm (\cite[Theorem 5.4]{BPR})}\label{algseries}
Let $\hat{g}_i: (\bC^n,0)\ra (\bC,0)$ be a finite family of algebraic power series. Then there exists a diffeomorphism $\sigma:(\bC^n,0)\ra (\bC^n,0)$ and analytic units $v_i: (\bC^n,0)\ra (\bC,0)$, $v_i(0)\ne 0$, such that for all $i$, $v_i(x)\hat{g}_i(\sigma(x))$ are germs of polynomials.
\end{thrm}

Let $\hat{g}=\prod^r_{i=1}\hat{g}_i$. Since $\hat{g}$ and $\prod_{i=1}^rg_i$ only differ by a unit, they have the same zero locus. Hence $\hat{g}$ has the same small ball complement at $g$. Let $\hat{h}_i=v_i(x)\hat{g}_i(\sigma(x))$ and $\hat{h}=\prod^r_{i=1}\hat{h}_i$. Then $\hat{h}$ and $\hat{g}$ differ by a analytic unit and a diffeomorphism $\sigma:(\bC^n,0)\ra (\bC^n,0)$. Again, the analytic unit does not change the small ball complement. Thus, the second part of the proposition follows from the same argument of the proof of Corollary \ref{polynomial}. 

Since $H^1(B\setminus \hat{h}^{-1}(0), \zz)\cong H^1(B\setminus g^{-1}(0), \zz)\cong \zz^r$, the irreducible decomposition of $\hat{h}$ as holomorphic function germ has $r$ factors. Recall that $\hat{h}=\prod^r_{i=1}\hat{h}_i$, where $\hat{h}_i$ are distinct polynomial function germs vanishing at the origin. Therefore, each polynomial function germ $\hat{h}_i$ is also irreducible as a holomorphic function germ. Thus the first part of the proposition follows.



\end{proof}

Next, we will deform the polynomials $\hat{h}_i$ to polynomials $h_i$ defined over $\bar{\qq}$. We use a similar idea to one in \cite{S} and \cite{BW}. 

\begin{prop}\label{Qbarpolynomial}
Let $\hat{h}_i: \cc^n\to \cc$ $(i=1,\ldots ,r)$ be irreducible polynomial functions. Let $\hat{h}=\prod_{i=1}^r\hat{h}_i$. Assume that the analytic germ of $\hat{h}$ at $0$ is reduced and the $\hat{h}_i$  define the mutually distinct analytic branches.  Then there exist irreducible polynomial functions $h_i:\cc^n\to \cc$ $(i=1,\ldots, r)$ defined over $\bar{\qq}$ such that, letting $h=\prod_{i=1}^rh_i$, we have:

\begin{enumerate}

\item[(1)] The germ of $h$ at $0$ defines a reduced analytic hypersurface germ, and $h=\prod_{i=1}^rh_i$ is  the irreducible decomposition (up to multiplication by invertible holomorphic germs) of the germ of $h$ at $0$.

\item[(2)] If $B$ is a small enough ball centered at $0$ in $\bC^n$, then $B\setminus \hat{h}^{-1}(0)$ is homotopy equivalent to $B\setminus h^{-1}(0)$.
\end{enumerate}
\end{prop}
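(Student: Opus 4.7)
The plan is to embed $(\hat h_1,\dots,\hat h_r)$ into a $\qq$-algebraic family of polynomial tuples and locate a topologically equivalent $\bar\qq$-point inside the equisingular stratum of the reference tuple. Let $d_i=\deg\hat h_i$ and let $V_i$ be the affine $\qq$-space of polynomials on $\cc^n$ of degree $\le d_i$ vanishing at the origin; set $V=\prod_i V_i$, so that $(\hat h_1,\dots,\hat h_r)$ defines a complex point $\hat c\in V(\cc)$. Over this base I would form the $\qq$-defined incidence varieties
$$
\mathcal{Z}_i=\{(x,c)\in\cc^n\times V: h_{i,c}(x)=0\},\qquad \mathcal{Z}=\bigcup_i\mathcal{Z}_i,
$$
where $h_{i,c}$ denotes the polynomial corresponding to the coefficient data $c$.

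Fix a closed ball $\bar B\subset\cc^n$ centered at $0$ small enough that $\bar B\setminus \hat h^{-1}(0)$ has the homotopy type of a small ball complement of $\hat h$. I would apply the algebraic Whitney stratification theorem to the subvarieties $\mathcal{Z}\cap(\bar B\times V)$, each $\mathcal{Z}_i\cap(\bar B\times V)$, and $\{0\}\times V$ inside $\bar B\times V$, producing $\qq$-algebraic strata compatible with the projection $\pi:\bar B\times V\to V$ (via Thom's $a_\pi$-condition). Thom's First Isotopy Lemma then ensures that, over each stratum $W$ of $V$, the pair $(B, h_c^{-1}(0)\cap B)$ as well as each individual $(B, h_{i,c}^{-1}(0)\cap B)$ is topologically locally trivial in $c\in W$ for a suitable smaller open ball $B$. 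Take $W$ to be the stratum containing $\hat c$; it is $\qq$-Zariski-locally-closed. Intersect $W$ with the further $\qq$-Zariski-open locus where each $h_{i,c}$ is globally irreducible (non-empty since $\hat h_i$ are irreducible), obtaining $W'$. Since $\bar\qq$-points are dense in the complex topology in any positive-dimensional $\qq$-irreducible variety (reducing via Noether normalization to density of $\bar\qq$ in $\cc$), I pick $c\in W'(\bar\qq)$ close to $\hat c$ and set $h_i:=h_{i,c}$.

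Property (2) follows immediately from the topological triviality: $B\setminus h^{-1}(0)$ is homeomorphic, hence homotopy equivalent, to $B\setminus\hat h^{-1}(0)$. For property (1), the key fact is that the number of analytic branches of a reduced complex hypersurface germ is a topological invariant (it equals the number of local irreducible components, which equals the number of connected components of a small punctured neighborhood in the smooth part). Since the stratification was arranged to trivialize each $\mathcal{Z}_i$ individually and $\hat h_i$ defines an irreducible germ at $0$, so does $h_i$; pairwise distinctness of the branches is preserved for the same reason. The main technical obstacle I anticipate is establishing the Whitney stratification with $\qq$-algebraic strata compatible with $\pi$ — the algebraic version of Whitney--Verdier/Thom stratification is standard but requires care, because the subsequent density argument only works when the stratum of $\hat c$ in $V$ is a genuine $\qq$-subvariety rather than merely a complex-analytic subset.
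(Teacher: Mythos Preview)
Your approach is correct and takes a genuinely different route from the paper. The paper first fixes a log resolution of $(\cc^n,D)$, with $D=\hat h^{-1}(0)$, defined over a finitely generated subring $R\subset\cc$, and then specializes the whole resolution to a $\bar\qq$-point of $\spec(R)$; the topological equivalence of small ball complements is read off by comparing tubular neighborhoods of the exceptional fiber $\pi^{-1}(0)$ in the resolved space (invoking a stratified-homeomorphism result from \cite{BW}), and irreducibility of each germ $h_i$ is deduced from a lemma equating analytic irreducibility with connectedness of the strict transform over the origin. You bypass resolution entirely: you embed $(\hat h_i)$ in the full coefficient space $V$, Whitney-stratify the universal incidence variety, and invoke Thom's First Isotopy Lemma directly; irreducibility of the $h_i$-germs then follows from the topological invariance of the branch number. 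The paper's route is cleaner because the stratified-triviality step is outsourced and the normal-crossing picture makes the ball-versus-tubular-neighborhood comparison transparent; your route is more elementary in that it needs no resolution, but it must confront the tension you flag between wanting $\qq$-algebraic strata on $V$ and needing the compact (non-algebraic) $\bar B$ for properness. The standard fix is to stratify $\cc^n\times V$ algebraically first (so the induced partition of $V$ is $\qq$-constructible), then choose the radius so that $\partial\bar B$ is transverse to all strata over $\hat c$; transversality persists on an analytic neighborhood of $\hat c$ inside its stratum, and analytic density of $\bar\qq$-points there (or, if the stratum is zero-dimensional, the fact that $\hat c$ is already a $\bar\qq$-point) finishes the job.
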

\begin{proof}
First, we consider $\hat{h}_i$ as a global polynomial functions defined on $\cc^n$. Denote the zero locus of $\hat{h}_i$ in $\cc^n$ by $D_i$. Let $D$ be the zero locus of $\hat{h}$. Let 
\begin{equation}\label{resolution}
X_{l}\xrightarrow{\pi_{l}}X_{l-1}\xrightarrow{\pi_{l-1}}\cdots \xrightarrow{\pi_{1}}X_{0}=\cc^n
\end{equation}
be a log resolution of the pair $(\cc^n, D)$. Here, we assume each $\pi_j$ is the blow-up along a smooth center. Denote the composition $\pi_1\circ\cdots\circ \pi_l$ by $\pi$. Moreover, we can assume that $E{=}\pi^{-1}(0)$ is a union of simple normal crossing divisors in $X_l$. 

Let $R$ be a finitely generated subring (over $\qq$) of $\cc$ such that all $D_i$ and all $X_j$ together with all morphisms $\pi_j$ are defined over $R$. In other words, the polynomials $\hat{h}_i$ and all the centers of the blow-ups are defined over $R$. In particular, $\hat{h}_i\in R[x_1, \ldots, x_n]$, where $x_1, \ldots, x_n$ are the coordinates of $\cc^n$. 

We can consider $\spec(R)$ as a variety. The natural inclusion $\iota: R\to \cc$ corresponds to a $\cc$-point of $\spec(R)$, which we also denote by $\iota$. Since the sequence of morphisms (\ref{resolution}) is defined over $R$, we can consider it as a sequence of morphisms over $\spec(R)$. To emphasize the sequence is over $\spec(R)$, we use an extra subscript $R$ as in the following: 
\begin{equation}\label{Rresolution}
X_{l, R}\xrightarrow{\pi_{l, R}}X_{l-1, R}\xrightarrow{\pi_{l-1, R}}\cdots \xrightarrow{\pi_{1, R}}X_{0, R}=\spec(R)^{\times n}.
\end{equation}
The restriction of (\ref{Rresolution}) to $\iota\in \spec(R)$ is isomorphic to (\ref{resolution}). Given a $\cc$-point $\sigma\in \spec(R)$, we denote the restriction of the sequence (\ref{Rresolution}) to $\sigma$ by
\begin{equation}\label{sigmaresolution}
X_{l, \sigma}\xrightarrow{\pi_{l, \sigma}}X_{l-1, \sigma}\xrightarrow{\pi_{l-1, \sigma}}\cdots \xrightarrow{\pi_{1, \sigma}}X_{0, \sigma}=\cc^n.
\end{equation}
Denote the composition $\pi_{1, \sigma}\circ\cdots\circ\pi_{l, \sigma}$ by $\pi_{\sigma}$. When $\sigma\in \spec(R)$ is a general point, (\ref{sigmaresolution}) is also a resolution of singularity such that $E_{\sigma}\stackrel{\textrm{def}}{=}\pi_{\sigma}^{-1}(0)$ is a normal crossing divisor. By the proof of \cite[Theorem 2.1]{BW}, when $\sigma$ is a general $\cc$-point in $\spec(R)$ close to $\iota$, the pair $(X_{l, \sigma}, \pi^{-1}_{\sigma}(D_\sigma))$ is homeomorphic to the pair $(X_l, \pi^{-1}(D))$ in the stratified sense, and hence the pair $(X_{l, \sigma}, E_\sigma)$ is homeomorphic to the pair $(X_l, E)$ in the stratified sense. Clearly the $\bar{\qq}$-points are Zariski dense in $\spec(R)$. Thus we can choose a $\bar{\qq}$-point $\sigma$ such that $(X_{l, \sigma}, \pi^{-1}(D_\sigma))$ is homeomorphic to $(X_l, \pi^{-1}(D))$ in the stratified sense. Fix such $\bar{\qq}$-point $\sigma$, or equivalently a ring map $\sigma: R\to \bar{\qq}$. The ring map $\sigma$ extends to $\sigma: R[x_1, \ldots, x_n]\to \bar{\qq}[x_1, \ldots, x_n]$. Recall that $\hat{h}_i\in R[x_1, \ldots, x_n]$. Thus we can define $h=\sigma(\hat{h})$ and $h_i=\sigma(\hat{h}_i)$ as polynomials defined over $\bar{\qq}$. 

Let $T(E)$ be a tubular neighborhood of $E$ in $X_l$, and let $T(E_{\sigma})$ be a tubular neighborhood of $E_{\sigma}$ in $X_{l, \sigma}$. Since the pair $(X_{l, \sigma}, \pi^{-1}(D_\sigma))$ is homeomorphic to the pair $(X_l, \pi^{-1}(D))$ in the stratified sense, the complement of $E$ in $T(E)$  is homeomorphic to the complement of $E_\sigma$ in $T(E_\sigma)$. Notice that $\pi$ induces a homeomorphism between the complement of $E$ in $T(E)$ and a small ball complement of $\hat{h}$. The same is true for the complement of $E_\sigma$ in $T(E_\sigma)$ and a small ball complement of $h$. Therefore, a small ball complement of $\hat{h}$ is homeomorphic to a small ball complement of $h$. Since $\hat{h}_i$ are irreducible polynomials, and since $\sigma$ is close to $\iota$, $h_i$ are also irreducible. Therefore, the second part of the proposition follows.

Next, we need an elementary fact about the irreducibility of an analytic germ. 
\begin{lemma}
An analytic germ of a hypersurface $Z\subset X$ ($X$ smooth) at $P$ is irreducible if and only if for any  log resolution of $Z$, $\pi: Y\to X$, the preimage of $P$ in $\tilde{Z}$, i.e. $\tilde{Z}\cap \pi^{-1}(P)$, is connected. Here, $\tilde{Z}$ is the strict transform of $Z$. 
\end{lemma}
\begin{proof}
Suppose $Z$ is irreducible. Then the normalization of $Z$, denoted by $Z_{norm}$, is the germ of a normal analytic variety (see \cite[Theorem 6, page 203]{G}). In other words, the preimage of $P$ under the normalization map $Z_{norm}\to Z$ consists of only one point. Since $\tilde{Z}$ is normal, the map $\tilde{Z}\to Z$ factors through $\tilde{Z}\to Z_{norm}$. Since any bimeromorphic finite morphism to a normal variety is always an isomorphism, by Zariski's main theorem, $\tilde{Z}\to Z_{norm}$ has connected fibers. Therefore, the preimage of $P$ in $\tilde{Z}$ is connected. 

Conversely, suppose $Z$ is reducible. Let $B$ be a sufficiently small ball in $X$ centered at $P$. Then $Z\cap B$ is reducible, and hence $\tilde{Z}\cap \pi^{-1}(B)$ is also reducible. Since $\tilde{Z}$ is smooth, $\tilde{Z}\cap \pi^{-1}(B)$ is not connected. Since $\pi$ is a holomorphic map and $B$ is sufficiently small, $\tilde{Z}\cap \pi^{-1}(P)$ is not connected. 
\end{proof}

The pair $(X_{l, \sigma}, \pi^{-1}(D_\sigma))$ is homeomorphic to the pair $(X_l, \pi^{-1}(D))$ in the stratified sense. Thus by the preceding lemma, $h_i$ is irreducible as a holomorphic function germ since so is $\hat{h}_i$.  

The reducedness of the germ at $0$ defined by $\hat{h}=\prod_{i=1}^r\hat{h}_i$ follows as before from the fact $r$ must be the number of analytic branches, due to the homotopy equivalence between the small ball complements for $\hat{h}$ and $h$.

\end{proof}

\begin{proof}[Proof of Theorem \ref{Qbar2}]
Let $h$ and $h_i$ be defined as in Proposition \ref{Qbarpolynomial}. The first two parts of the theorem follows from Corollary \ref{polynomial}, Proposition \ref{irreduciblepolynomial}, Theorem \ref{algseries} and  Proposition \ref{Qbarpolynomial}. 

To prove the third part, let $H_i$ be the closure of $h_i^{-1}(0)$ in $\bP^n$. $H_i$ are irreducible of degree say $d_i$. Let $H_0$ be the hyperplane at infinity. Then $W=\bP^n\setminus (\cup_{i=0}^r H_i).$ By a well-known computation,
$$
H_1(W,\bZ)\cong \left (\bigoplus_{i=0}^r\bZ \cdot[H_i]\right) \bigg/ \left (\sum_{i=0}^rd_i[H_i]\right).
$$
The isomorphism can be made canonical  by fixing the meaning of the classes $[H_i]$. Since $d_0=1$, it follows that $H_1(W,\bZ)\cong \bZ^r$. In particular, there are canonical isomorphisms $\mb(W)\cong (\bC^*)^r \cong \mb (B\setminus h^{-1}(0))$ given by identifying local systems of rank one with their monodromies around the divisors. This finishes the proof of Theorem \ref{Qbar2}.
\end{proof}


\section{$\cD$-modules}\label{dmod}

By the previous section, we can restrict ourselves to the following setup. Let $f_i\in\cc[x_1,\ldots,x_n]$ for $1\le i\le r$ be distinct (up to a multiplication by a nonzero constant) irreducible polynomials, defined over $\bar{\qq}$. Set $f=\prod_{i=1}^rf_i$, $X=\bC^n$, $D=f^{-1}(0)$, $W=X\setminus D$.  Denote the origin of $\cc^n$ by $O$, and let $u:\{O\}\ra X$ denote the natural inclusion. Let $B$ a very small open ball in $X$ centered at $O$, and let $U=B\setminus (B\cap D)$. We assume in this section that the restriction
$$
\mb(W)\xrightarrow{\sim}\mb(U)
$$
is an isomorphism, and that
$
\mb(U)
$
is identified with $(\bC^*)^r$ by monodromies around general points around the divisors given by $f_i$.

Let $\cD_X$ be the sheaf of algebraic linear differential operators on $X$. We introduce a multivariable ${s}=(s_1, \ldots, s_r)$ and the product notation
$$
{f}^{{s}}=\prod_{i=1}^rf_i^{s_i}.
$$
Let $$\cD_X[s]f^s=\cD_X[s_1, \ldots, s_r]f_1^{s_1}\cdots f_r^{s_r}$$ be the natural left $\cD_X[s]$-module. Here, for $P\in\cD_X[s]$, the expression $Pf^s$ in $\cD_X[s]f^s$ means the result of the application of the operator $P$ on $f^s$ as expected.  As left $\cD_X[s]$-modules, $\cD_X[s]f^s\cong \cD_X[s]/\Ann(f^s)$, where $\Ann(f^s)$ denotes the left ideal  of operators $P\in\cD_X[s]$ with $Pf^s=0$. In other words, $\cD_X[s]f^s$ is a $\cD_X[s]$-submodule of the rank one free $\cO_X[f^{-1},s]$-module generated by  $f^s$.

Let $Mod(\cD_X)$ denote the category of left $\cD_X$-modules. Recall that there is an ``easy" pullback functor $$u^*: Mod(\cD_X)\to Mod(\cD_O)$$ is defined by $$M\mapsto \cD_{O\to X}\mathop{\otimes}_{u^{-1}(\cD_X)}u^{-1}(M),$$
where $u^{-1}$ is the sheaf theoretic pullback. In this case, the transfer $(\cD_{O},u^{-1}(\cD_X))$-bimodule $\cD_{O\to X}$ is isomorphic to
$$\bC[x_1,\ldots,x_n, \partial/\partial x_1, \ldots, \partial/\partial x_n]/(x_1,\ldots,x_n)\cong \bC[\partial/\partial x_1, \ldots, \partial/\partial x_n].$$ 
Here the right $u^{-1}(\cD_X)$-module structure on $\bC[\partial/\partial x_1, \ldots, \partial/\partial x_n]$ is given by the following. The multiplication by $\partial/\partial x_i$ is simply the usual multiplication and the multiplication by $x_i$ is given by 
$$\left((\partial/\partial x_1)^{m_1}\cdots(\partial/\partial x_n)^{m_n}\right)\cdot x_i=m_i(\partial/\partial x_1)^{m_1}\cdots(\partial/\partial x_i)^{m_i-1}\cdots(\partial/\partial x_n)^{m_n}.$$
By taking the derived tensor product, one has the left derived functor of $u^*$,
$$
Lu^*:D^b(\cD_X)\to D^b(\cD_O),
$$
on the bounded derived category of left $\cD$-modules. The special $\cD$-module inverse image 
$$u^+: D^b(\cD)\to D^b(\cD_O)$$
is defined as
$$
u^+=Lu^*[-n].
$$
Notice that a $\cD_O$-module is nothing but a $\bC$ vector space, and so $D^b(\cD_O)$ is the bounded derived category of complexes of vector spaces.

The left adjoint of $u^+$ is the functor
$$
u^\bigstar = \bD_O \circ u^+ \circ\bD_X : D^b(\cD_X)\to D^b(\cD_O).
$$
Here $\bD_X$ is the dualizing functor,
$$
\bD_X=R\mathcal{H}om_{\cD_X}(\;.\;,\cD_X)\mathop{\otimes}_{\cO_X}\omega_X^{\otimes -1}[n] : D^b(\cD_X) \to D^b(\cD_X),
$$
where $\omega_X= \cO_Xdx_1\wedge\ldots\wedge dx_n$ is the canonical sheaf on $X$.



\begin{defn}
We define set-theoretically the {\it $\cD$-module cohomology jump loci of $f$ at the origin} by
$$\Sigma^i_k(f,O)=\left\{\al\in \cc^r\mid\dim_{\cc} H^i\left(u^{\bigstar}\left(\cD_X[s]f^s\otimes_{\bC[s]}\bC[s]/(s-\al)\right)\right)\geq k\right\}.$$
Here, by $\cc[s]/(s-\al)$ we mean the quotient of $\bC[s_1,\ldots,s_r]$ by the ideal $(s_1-\al_1,\ldots,s_r-\al_r).$ To simplify the notation when the context is clear, we will also denote this set by $\Sigma^i_k(f)$.
\end{defn}

\begin{rmk}
It will be clear from the proof of Theorem \ref{thrmS} below that the sets $\Sigma^i_k(f)$ typically contain infinitely many irreducible components and that the $\bZ^r$ translates of each component appear. Hence $\Sigma^i_k(f)$ are not Zariski closed in $\bC^r$. In particular, the method employed in \cite{BW} for showing the torsion translated subtori property of $\cV^i_k(U)$ for quasi-projective $U$ is not immediately available here. However, by restricting to smaller analytic open subsets of $\bC^r$, one only sees finitely many components in $\Sigma^i_k(f)$. In this article, the main idea  to adapt the method of \cite{BW} to work with this type of finiteness only.
\end{rmk}

Define the analytic map
$
\Exp : \bC^r\ra (\bC^*)^r
$
by  $\al\mapsto\exp(2\pi i \al)$. The goal in this section is to prove:

\begin{thrm}\label{thrmS} With the assumptions on $f$ from the beginning of this section, we have:
\begin{enumerate}
\item $\Sigma^i_k(f)$ is preserved under the action of $Gal(\cc/\bar{\qq})$.
\item  Let $\lam\in (\bC^*)^r$. Then there exists an open ball  $N\subset\bC^r$ biholomorphic to  an open neighborhood $\Exp(N)$ of $\lam$, and such that    $$\Exp(\Sigma^{i-n}_k(f)\cap N)=\cV^i_k(U)\cap\Exp(N).$$ 
\end{enumerate}
\end{thrm}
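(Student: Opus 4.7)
The proof naturally splits along the two claims.

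For part (1), the key observation is that all the input data is defined over $\bar{\qq}$, hence so is the entire construction. Since each $f_i \in \bar{\qq}[x_1,\ldots,x_n]$, the $\cD_X[s]$-module $\cD_X[s]f^s$ admits a natural $\bar{\qq}$-form: both the generator $f^s$ and the defining relations lie in $\cD_{X,\bar{\qq}}[s]$. Using the coherence of $\cD_X[s]f^s$ over $\cD_X[s]$, one may choose, locally near any maximal ideal of $\bC[s]$, a bounded free resolution by finitely generated $\cD_X[s]$-modules with differentials in $\cD_{X,\bar{\qq}}[s]$. Applying $u^\bigstar$ involves only Koszul-type constructions with the operators $\partial/\partial x_i$ and is therefore defined over $\bar{\qq}[s]$. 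The jump loci $\Sigma^i_k(f)$ are then cut out by Fitting ideals of a complex of $\bar{\qq}[s]$-modules, so $\Sigma^i_k(f)$ is $\bar{\qq}$-constructible and preserved by $\mathrm{Gal}(\cc/\bar{\qq})$.

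For part (2), the approach is via Riemann--Hilbert. Given $\lambda\in(\bC^*)^r$, choose $\alpha_0\in\Exp^{-1}(\lambda)$ with $\mathrm{Re}(\alpha_{0,i})$ sufficiently negative for all $i$, and let $N\subset \bC^r$ be a small open ball around $\alpha_0$. The smallness of $N$ and the choice of $\alpha_0$ should be arranged so that: (a) $\Exp|_N$ is a biholomorphism onto a neighborhood of $\lambda$; (b) via Sabbah's multivariable Bernstein--Sato functional equations one obtains, for every $\beta\in N$, an isomorphism
$$
\cD_X[s]f^s\otimes_{\bC[s]}\bC[s]/(s-\beta)\;\xrightarrow{\sim}\;\cD_Xf^\beta\;=\;\cO_X[f^{-1}]f^\beta;
$$
(c) the family $\{\cO_X[f^{-1}]f^\beta\}_{\beta\in N}$ is flat in $\beta$, so that $u^\bigstar$ commutes with the base change $\otimes_{\bC[s]}\bC[s]/(s-\beta)$ throughout $N$.

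Under the Riemann--Hilbert correspondence, the regular holonomic $\cD_X$-module $\cO_X[f^{-1}]f^\beta$ corresponds (up to shift) to $Rj_*L_{\Exp(\beta)}$, where $j\colon W\hookrightarrow X$ denotes the open inclusion, and $u^\bigstar$ corresponds to the shifted stalk functor at $O$. Because $U$ is a small ball complement of $D$, the stalk of $Rj_*L_{\Exp(\beta)}$ at $O$ computes $H^\ubul(U,L_{\Exp(\beta)})$. Combining (b), (c), and the shift $[n]$, one obtains
$$
\dim_{\bC}H^{i-n}\bigl(u^\bigstar(\cD_X[s]f^s\otimes_{\bC[s]}\bC[s]/(s-\beta))\bigr)\;=\;\dim_{\bC}H^i(U,L_{\Exp(\beta)})
$$
for every $\beta\in N$, which immediately yields $\Exp(\Sigma^{i-n}_k(f)\cap N)=\cV^i_k(U)\cap\Exp(N)$.

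The main technical obstacle is step (b)--(c): verifying uniformly in a neighborhood $N$ that the specialization $\cD_X[s]f^s\otimes\bC[s]/(s-\beta)$ agrees with $\cO_X[f^{-1}]f^\beta$ and that $u^\bigstar$ commutes with this base change. Both rely on selecting $\alpha_0$ away from the resonance locus determined by the zeros of the relevant multivariable $b$-functions, so that for all $\beta\in N$ one can invert the functional equations to show $f_j^{-1}f^\beta\in\cD_Xf^\beta$ and the derived specialization agrees with the underived one. Everything else is a direct application of the Riemann--Hilbert correspondence together with the local description of $U$ as the small ball complement.
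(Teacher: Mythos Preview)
Your approach to Part~(2) is essentially the same as the paper's: pick $\alpha_0\in\Exp^{-1}(\lambda)$ with $\mathrm{Re}(\alpha_{0,i})$ very negative, use the Bernstein--Sato functional equations to identify $\cD_X[s]f^s\big|_{s=\beta}$ with $\cD_Xf^{\beta}=j_*(\cD_Wf^\beta)$ for $\beta$ near $\alpha_0$, and invoke Riemann--Hilbert (with $DR_O\circ u^\bigstar\cong u^{-1}\circ DR_X$) to land on $H^\ubul(U,L_{\Exp(\beta)})$. Your step~(c), however, is unnecessary and potentially misleading: the argument is purely pointwise in $\beta$, and no flatness of the family or commutation of $u^\bigstar$ with base change is required. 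What \emph{is} needed is that the identification in (b) holds uniformly for all $\beta$ in some ball $N$; the paper extracts this from Sabbah's result that the Bernstein--Sato ideal is nonzero, giving a single integer $N_0$ such that $\cD_X[s]f^s\big|_{s=\beta}\cong\cD_Xf^{\beta}$ whenever all $\mathrm{Re}(\beta_i)\le -N_0$.

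For Part~(1) your route differs from the paper's and, as written, has a gap. You want to resolve $\cD_X[s]f^s$ by free $\cD_X[s]$-modules with $\bar\qq$-differentials, push through $u^\bigstar$, and read off $\Sigma^i_k(f)$ as the zero locus of Fitting ideals in $\bar\qq[s]$. The problem is that the definition of $\Sigma^i_k(f)$ involves the \emph{underived} tensor $\cD_X[s]f^s\otimes_{\bC[s]}\bC[s]/(s-\alpha)$ followed by the derived functor $u^\bigstar$; tensoring your resolution with $\bC[s]/(s-\alpha)$ computes instead the \emph{derived} tensor $\cD_X[s]f^s\otimes^L_{\bC[s]}\bC[s]/(s-\alpha)$, and the Fitting-ideal locus you obtain governs the cohomology jump loci of $u^\bigstar$ applied to the family followed by specialization. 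To match these you would need to justify that $u^\bigstar$ commutes with the base change $\otimes_{\bC[s]}\bC[s]/(s-\alpha)$---precisely the issue you flagged as step~(c) in Part~(2) and then did not address. The paper bypasses this entirely by arguing pointwise: for fixed $\alpha$ it applies $\sigma\in\mathrm{Gal}(\bC/\bar\qq)$ directly to the finite-dimensional vector space $H^i\bigl(u^\bigstar(\cD_X[s]f^s\otimes\bC[s]/(s-\alpha))\bigr)$, checks that $\sigma$ commutes with $\bD_X$, $u^+$, and $\bD_O$ (since $X$, $O$, and $u$ are defined over $\bar\qq$), and uses that $\sigma(D_X[s]f^s)=D_X[s]f^s$ because $\Ann_{D_X[s]}(f^s)$ is generated over $\bar\qq$ (a nontrivial input, for which the paper cites \cite{BO}). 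This yields $k(\alpha)=k(\sigma(\alpha))$ with no base-change argument at all. Your underlying intuition---that the whole construction is $\bar\qq$-defined---is correct, but the Fitting-ideal packaging requires either a flatness statement you have not supplied or a reformulation along the paper's pointwise lines.
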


To prove this theorem, we need some preliminary remarks.

We recall next a few topological facts. Denote by $D^b_c(X)$ the bounded derived category of constructible sheaves on $X$, and by $Perv(X)$ the abelian category of perverse sheaves on $X$.  Let $j:W\ra X$ be the open embedding and $i:D\ra X$ the closed embedding of the complement of $W$. For a  rank 1 local system $L$ on $W$, the shifted complex $L[n]$ is perverse on $W$. The complex $Rj_*(L[n])$ is also perverse.


\begin{lemma}\label{lemE}
$$\cV^{i}_k(U)=\{ \lam\in (\bC^*)^r\mid \dim H^{i-n}(u^{-1}Rj_*(L_\lam[n]))\ge k \}.$$
\end{lemma}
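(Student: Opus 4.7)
\medskip

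The plan is to identify the right-hand side with $H^i(U,L_\lam|_U)$ and then invoke the standing hypothesis that restriction gives an isomorphism $\mb(W)\xrightarrow{\sim}\mb(U)$, so that $\lam\mapsto L_\lam|_U$ is a bijection onto rank one local systems on $U$.

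First I would compute the stalk. Since $u:\{O\}\hookrightarrow X$ is the inclusion of a closed point, the functor $u^{-1}$ applied to a complex of constructible sheaves is just the stalk at $O$, and stalks of higher direct images are computed as direct limits over open neighborhoods. Thus
\[
u^{-1}Rj_*(L_\lam[n])\;\simeq\;\bigl(Rj_*L_\lam\bigr)_O[n]\;\simeq\;\varinjlim_{V\ni O} R\Gamma\bigl(V\cap W,\,L_\lam\bigr)[n],
\]
where $V$ ranges over open neighborhoods of $O$ in $X$.

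Next I would use the local conic structure of analytic sets, equivalently Prill's theory of good neighborhoods recalled in the proof of Corollary \ref{polynomial}, to replace the cofinal system $\{V\}$ by a cofinal system of small open balls $B$ centered at $O$. For each such $B$ we have $B\cap W=B\setminus D$, which by definition is (a representative of) the small ball complement $U$, and the defining property of good neighborhoods says that for all sufficiently small nested balls $B'\subset B$ the inclusion $B'\cap W\hookrightarrow B\cap W$ is a homotopy equivalence. Hence the transition maps in the direct limit are quasi-isomorphisms from some point on, and the limit stabilizes to
\[
u^{-1}Rj_*(L_\lam[n])\;\simeq\;R\Gamma(U,L_\lam|_U)[n].
\]
Taking $H^{i-n}$ on both sides yields $H^{i-n}\bigl(u^{-1}Rj_*(L_\lam[n])\bigr)\cong H^{i}(U,L_\lam|_U)$. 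Combined with the bijection $\mb(W)\xrightarrow{\sim}\mb(U)$, $\lam\mapsto L_\lam|_U$, this identifies the set on the right-hand side of the lemma with $\cV^i_k(U)$.

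I do not anticipate a serious obstacle; the statement is essentially bookkeeping combined with the fact that the stalk of $Rj_*$ at $O$ computes the cohomology of a small punctured neighborhood. The only point requiring mild care is ensuring the cofinality argument actually produces a stable direct limit, which is immediate from the good-neighborhood property already invoked earlier in the paper and is the reason the homotopy type of $U$ is well-defined in the first place.
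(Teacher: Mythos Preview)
Your proposal is correct and follows essentially the same approach as the paper: identify $u^{-1}Rj_*(L_\lam)$ with the stalk at $O$, then recognize that this stalk computes $H^\ubul(U,L_\lam)$. The paper's proof is more terse---it passes through $\Gamma(B,R^ij_*(L_\lam))=H^i(U,L_\lam)$ for a single small ball $B$ and implicitly uses constructibility to identify stalk with local sections---whereas you unpack the same point via the direct limit and the good-neighborhood cofinality argument; the extra care is harmless but not needed here.
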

\begin{proof}
Note that $u^{-1}Rj_*(L_\lam)$ is the stalk of $Rj_*(L_\lam)$ at $O$. Hence $H^i(u^{-1}Rj_*(L_\lam))$ is the stalk of $R^ij_*(L_\lam)$. This is same as the global sections of $R^ij_*(L_\lam)$ in a small ball $B$ around $O$. By the definition of the higher direct image functors on sheaves, $$\Gamma(B,R^ij_*(L_\lam))=H^i(U, L_\lam),$$
and the claim follows.
\end{proof}

Denote by $DR_X$ the de Rham functor. Recall that, by the Riemann-Hilbert correspondence, 
$$DR_X: D^b_{rh}(\cD_X)\ra D^b_c(X)$$ is an equivalence  of triangulated categories that restricts to an equivalence of abelian categories
$$
DR_X:Mod_{rh}(\cD)\ra Perv(X).
$$
Here $Mod_{rh}(\cD)$ is the abelian category of regular holonomic left $\cD_X$-modules, and $D^b_{rh}(\cD_X)$ is the bounded derived category of complexes of $\cD_X$-modules with regular holonomic cohomology sheaves.

To simplify notation, let $$
\cD_X[s]f^{s}|_{s=\al}\mathop{=}^{\text{def}}\cD_X[s]f^{s}\otimes_{\bC[s]}\bC[s]/(s-\al).
$$
For an $r$-tuple of numbers $\al=(\al_1,\ldots,\al_r)$ and a number $k$, we use the notation
$$
\al-k=(\al_1-k,\ldots,\al_r-k).
$$

\begin{prop}\label{propED} Let $\al\in\bC^r$, $\lam=\Exp(\al)\in(\bC^*)^r$. Let $k\gg 0$ be a very big integer. Then $\cD_X[s]f^s|_{s=\al-k}$ is a regular holonomic $\cD_X$-module and $$DR_X(\cD_X[s]f^s|_{s=\al-k})=Rj_*(L_\lam[n]),$$
where 
$
\al-k=(\al_1-k,\ldots, \al_r-k)\in\bC^r.
$
\end{prop}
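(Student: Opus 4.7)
The plan is to prove the claim in two stages. Setting $\beta = \alpha - k$, I would first show that for $k \gg 0$ the $\cD_X$-module $\cD_X[s]f^s|_{s=\beta}$ is isomorphic to the concrete rank-one meromorphic connection
\[ N_\beta := \cO_X[f^{-1}] \cdot f^\beta, \]
freely generated as an $\cO_X[f^{-1}]$-module by the symbol $f^\beta$, with the twisted action $\partial_j (f^\beta) = \sum_i \beta_i (\partial_j f_i / f_i) f^\beta$. Then I would compute $DR_X(N_\beta)$ topologically via the direct-image formula for $j$.

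For the first stage, consider the natural $\cD_X$-linear specialization map $\pi \colon \cD_X[s]f^s|_{s=\beta} \to N_\beta$ sending $P(s)f^s \mapsto P(\beta) f^\beta$. I would invoke Sabbah's multivariate Bernstein--Sato theorem: there exist nonzero polynomials $b_i(s) \in \bC[s_1,\ldots,s_r]$ and operators $Q_i(s) \in \cD_X[s]$ satisfying
\[ b_i(s)\, f^s = Q_i(s)\cdot f_i\cdot f^s \qquad (i=1,\ldots,r), \]
where each $b_i$ is a product of linear forms in the $s_j$'s with nonnegative integer coefficients. These equations let one recursively produce $f^{\beta-m} \in \cD_X \cdot f^\beta$ for every $m \in \zz^r_{\geq 0}$, provided the relevant $b_i$-values along the way are nonzero. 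The structural form of $b_i$ guarantees that the diagonal shift $\beta = \alpha - k(1,\ldots,1)$ makes all values $b_i(\beta-m)$ with $m \in \zz_{\geq 0}^r$ nonzero once $k \gg 0$: this amounts to finitely many inequalities of the shape $L(\alpha) - kC - L(m) \neq 0$ with $C > 0$, which hold automatically for large $k$. This yields surjectivity of $\pi$. For injectivity, the cokernel of the tautological inclusion $\cD_X[s]f^s \hookrightarrow \cO_X[s, f^{-1}]f^s$ is annihilated by some nonzero $b(s) \in \bC[s]$, so it vanishes locally near any $\beta$ with $b(\beta) \neq 0$; this kills the obstructing $\mathrm{Tor}_1^{\bC[s]}(-,\bC[s]/(s-\beta))$ and makes the specialization of the inclusion injective.

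For the second stage, I would identify $N_\beta$ with the algebraic $\cD$-module direct image $j_+ M_\beta$, where $M_\beta = \cO_W \cdot f^\beta$ is the rank-one integrable connection on $W$ with connection form $\sum_i \beta_i \, d\log f_i$. Since this form has only logarithmic poles along $D$, $M_\beta$ is a regular meromorphic connection, so $j_+ M_\beta$ is regular holonomic. The standard compatibility of $DR$ with $\cD$-module direct image under an open embedding gives
\[ DR_X(j_+ M_\beta) \;=\; Rj_*\bigl(DR_W(M_\beta)\bigr) \;=\; Rj_*(\sL[n]), \]
where $\sL$ is the rank-one local system of horizontal sections of $M_\beta$. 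A direct monodromy computation identifies $\sL$ with $L_\lam$: a small loop around $\{f_i = 0\}$ transforms the multivalued symbol $f^\beta$ by $\exp(2\pi i \beta_i) = \exp(2\pi i \alpha_i) = \lam_i$, using that $k \in \zz$.

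The harder step is the first stage: showing that a single integer $k$ suffices to make $\beta = \alpha - k$ simultaneously generic in all $r$ coordinate directions. This is where the positivity properties of Sabbah's $b_i$-polynomials (linear-form factors with nonnegative integer coefficients) are essential; without this structural input the diagonal ray $\{\alpha - k(1,\ldots,1)\}$ could conceivably meet the bad locus $\bigcup_{i,m}\{s : b_i(s-m) = 0\}$ infinitely often, and one would have to shift each coordinate $\alpha_i$ separately by its own large integer rather than by a common $k$.
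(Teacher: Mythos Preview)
Your two-stage plan matches the paper's: identify the specialization with the meromorphic connection $N_\beta$, then apply the direct-image compatibility of $DR$. Your second stage is fine and coincides with the paper's treatment. In the first stage, however, the paper inserts the intermediate object $\cD_X f^{\alpha-k}$ (the cyclic $\cD_X$-submodule of $N_\beta$ generated by $f^{\alpha-k}$) and handles the two halves separately: the isomorphism $\cD_X[s]f^s|_{s=\alpha-k}\cong \cD_X f^{\alpha-k}$ is quoted from \cite[Prop.~3.6]{OTa} as a consequence of nonzero Bernstein--Sato ideals, while the equality $\cD_X f^{\alpha-k}=j_*(\cD_W f^\alpha)$ uses only the \emph{one-variable} Bernstein--Sato polynomial of $f$ for the section $f^\alpha$. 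This second point dissolves your ``harder step'' entirely: since one is multiplying by powers of $f^{-1}=\prod_i f_i^{-1}$, the shift is automatically along the diagonal and governed by a single-variable $b(t)$ with finitely many roots; it suffices that $k$ exceed those. Your individual-direction equations $b_i(s)f^s=Q_i(s)f_i f^s$ and their claimed structural form (product of linear forms with nonnegative integer coefficients) are unnecessary, and that structural claim is not standard in the literature in the form you state it.

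Your injectivity argument has a genuine gap as written: the cokernel $\cO_X[s,f^{-1}]f^s/\cD_X[s]f^s$ is not annihilated by any single $b(s)\in\bC[s]$. The class of $f^{s-m}$ is killed only by the product $\prod_{j=1}^{m}b(s-j)$, so controlling $\mathrm{Tor}_1$ at $s=\beta$ requires $\beta=\alpha-k$ to avoid the entire infinite union $\bigcup_{j\ge 1}V(b(s-j))$, not merely that $b(\beta)\ne 0$. This is precisely the diagonal-shift subtlety you flag at the end, now reappearing on the injectivity side; the paper sidesteps it by citing \cite{OTa} rather than arguing from scratch.
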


This result is known to $\cD$-module experts and follows from the following two Lemmas. For $\al\in\bC^r$, we denote by 
$
\cD_X f^\al
$
the left $\cD_X$-submodule of $\cO_X[f^{-1}]\cdot f^\al$ generated by the symbol $f^\al=\prod_{i=1}^rf_i^{\al_i}$ with the expected behavior under differential operators. There is a natural $\cD_X$-module morphism
$$
\cD_X[s]f^s|_{s=\al}\ra \cD_X f^{\al}
$$
obtained by specialization. This morphism is always surjective, but it is not necessarily an isomorphism. The following is a straight-forward consequence of the existence of non-zero Bernstein-Sato ideals and one can find the proof in \cite[Proposition 3.6]{OTa}, for example:

\begin{lemma} Let $\al\in\bC^r$ and $k\gg 0$. Then the natural $\cD_X$-module morphism
$$
\cD_X[s]f^s|_{s=\al-k}\ra \cD_X f^{\al-k}
$$
is an isomorphism.
\end{lemma}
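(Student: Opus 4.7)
The map is surjective by construction, so what remains is injectivity for $k\gg 0$. The plan is to invoke the multivariable Bernstein--Sato functional equation to prove that, in a Zariski neighborhood of $s=\al-k$, both source and target of the map coincide with the full localization $\cO_X[f^{-1}]f^{\al-k}$.

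By Sabbah's theorem on the multivariable Bernstein--Sato ideal there exist a nonzero polynomial
$$b(s)=\prod_\ell\Bigl(\sum_{i=1}^r a_{i\ell}s_i+c_\ell\Bigr)\in\cc[s_1,\ldots,s_r],$$
with each $a_{i\ell}\in\qq_{\ge 0}$ and not all $a_{i\ell}$ vanishing for any fixed $\ell$, together with an operator $P(s)\in\cD_X[s]$ satisfying $b(s)f^s=P(s)f^{s+\mathbf{1}}$ in $\cO_X[f^{-1},s]f^s$. Substituting $s\mapsto s-j\mathbf{1}$ for $j=1,\ldots,m$ and iterating yields $\prod_{j=1}^m b(s-j\mathbf{1})\cdot f^{s-m\mathbf{1}}\in\cD_X[s]f^s$. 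Since every element of $\cO_X[f^{-1},s]f^s$ has the form $h(s)f^{-m}\cdot f^s$ with $h\in\cO_X[s]$ and $m\ge 0$, this shows that the quotient $Q:=\cO_X[f^{-1},s]f^s/\cD_X[s]f^s$ is annihilated element-by-element by products of the form $\prod_{j=1}^m b(s-j\mathbf{1})\in\cc[s]$.

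Each linear factor of $b(s)$ has coefficient vector $(a_{i\ell})\ne 0$ and therefore vanishes on at most one point of the affine shift-line $\{\al-M\mathbf{1}:M\in\zz\}$; hence $b(\al-M\mathbf{1})=0$ for only finitely many $M\in\zz$. Choose $k$ larger than all such $M$, and let $\mathfrak{m}=(s_1-\al_1+k,\ldots,s_r-\al_r+k)\subset\cc[s]$. Every product $\prod_{j=1}^m b(s-j\mathbf{1})$ is then a unit in $\cc[s]_\mathfrak{m}$, so $Q_\mathfrak{m}=0$. Since $\cc[s]/\mathfrak{m}$ is annihilated by $\mathfrak{m}$, so are all $\mathrm{Tor}^{\cc[s]}_i(Q,\cc[s]/\mathfrak{m})$, which therefore coincide with their localizations at $\mathfrak{m}$ and vanish. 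Tensoring the short exact sequence
$$0\longrightarrow\cD_X[s]f^s\longrightarrow\cO_X[f^{-1},s]f^s\longrightarrow Q\longrightarrow 0$$
with the residue field $\cc[s]/\mathfrak{m}$ produces an isomorphism $\cD_X[s]f^s|_{s=\al-k}\xrightarrow{\sim}\cO_X[f^{-1}]f^{\al-k}$, and the image under the map of the lemma is $\cD_X f^{\al-k}$; hence $\cD_X f^{\al-k}=\cO_X[f^{-1}]f^{\al-k}$ and the map is an isomorphism.

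The main obstacle is securing the Bernstein--Sato input in the explicit factored form above. Merely knowing that the multivariable Bernstein--Sato ideal is nonzero would leave open the possibility that the shift-line lies in its zero set, so what is really needed is an element written as a product of linear forms with nonzero coefficient vectors. This is precisely Sabbah's refinement, and once it is in hand the remainder is a standard local-algebra exercise.
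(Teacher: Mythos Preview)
Your proof is correct and follows essentially the same route the paper indicates by deferring to \cite[Proposition 3.6]{OTa}: use the multivariable Bernstein--Sato functional equation to show that, after a sufficiently negative integer shift along the diagonal, the specialization of $\cD_X[s]f^s$ coincides with the full localization $\cO_X[f^{-1}]f^{\al-k}$, and hence with its $\cD_X$-submodule $\cD_Xf^{\al-k}$. Your localization/Tor argument is a clean way to pass from the element-by-element annihilation of the quotient $Q$ to the isomorphism after tensoring with $\bC[s]/(s-\al+k)$.

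One point worth highlighting: your final paragraph is exactly right, and it sharpens the paper's somewhat loose phrase ``existence of non-zero Bernstein--Sato ideals.'' A bare nonzero $b(s)$ in the ideal could in principle vanish identically on the diagonal line $\{\al-t\mathbf{1}\}$ (e.g.\ a factor like $s_1-s_2$ would), in which case no shift $k$ would help for that $b$. What makes the argument go through is precisely Sabbah's result that the ideal contains an element factoring as $\prod_\ell(\sum_i a_{i\ell}s_i+c_\ell)$ with $a_{i\ell}\ge 0$ and $\sum_i a_{i\ell}>0$, so each factor meets the diagonal line transversally and hence vanishes at most once along it. The paper itself invokes Sabbah's refinement \cite{sab} (via \cite[Proposition 3.2]{OTa}) a few lines later for the uniform bound, so your reading is consistent with what is actually being used.
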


Consider now on $W=X\setminus f^{-1}(0)$ the $\cD_W$-module generated by $f^\al$. This is a regular holonomic $\cD_W$-module such that $DR_W(\cD_Wf^\al)=L_\lam[n]$. The $\cD$-module direct image $j_*(\cD_Wf^\al)$ is the same as the sheaf theoretic direct image,
$$
j_*(\cD_Wf^\al)=\bigcup_{k\in \nn} f^{-k}\cD_X  f^\al.
$$
An immediate consequence of the existence of a non-zero Bernstein-Sato polynomial for the section $f^\al$ is that 
$$
j_*(\cD_Wf^\al)= \cD_X f^{-k} f^\al=\cD_X f^{\al-k}
$$
for integers $k\gg 0$, and that this is a regular holonomic $\cD$-module; for more details see \cite[page 25]{be}. To summarize:

\begin{lemma} Let $\al\in\bC^r$, $\lam=\Exp (\al)$, and $k\gg 0$. Then $\cD_X f^{\al-k}$ is a regular holonomic $\cD_X$-module and 
$$
DR_X (\cD_X f^{\al-k})=Rj_*L_\lam[n].
$$
\end{lemma}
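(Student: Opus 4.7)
The plan follows the route indicated in the two preceding lemmas. On the open set $W$, the $\cD_W$-module $\cD_W f^\al$ is the rank-one locally free $\cO_W$-module $\cO_W\cdot f^\al$ equipped with the integrable connection $\nabla(f^\al) = \bigl(\sum_{i=1}^r \al_i\, df_i/f_i\bigr)\otimes f^\al$. This is visibly regular holonomic, and its horizontal sections form the local system $L_\lam$ with $\lam=\Exp(\al)$, so $DR_W(\cD_W f^\al) = L_\lam[n]$. The strategy is to identify $\cD_X f^{\al-k}$, for $k\gg 0$, with the $\cD$-module direct image $j_*(\cD_W f^\al)$, and then apply the Riemann--Hilbert compatibility of $DR$ with $j_*$.

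First I would note that, since $j\colon W\hookrightarrow X$ is an open immersion, the $\cD$-module direct image $j_*$ agrees with the sheaf-theoretic direct image. Concretely, $j_*(\cD_W f^\al)$ is the $\cO_X[f^{-1}]$-submodule $\cO_X[f^{-1}]\cdot f^\al$ of the free rank-one $\cO_X[f^{-1}]$-module on the symbol $f^\al$, endowed with its natural $\cD_X$-action. As a $\cD_X$-module this is the ascending union
$$
j_*(\cD_W f^\al) \;=\; \bigcup_{k\in\nn} \cD_X f^{\al-k},
$$
exactly the formula quoted in the excerpt.

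The decisive step is to stabilize this union, for which I would use the univariate Bernstein--Sato polynomial of $f$ \emph{twisted} by the multi-exponent section $f^\al$: there exist a non-zero polynomial $b(t)\in\cc[t]$ and an operator $Q(t,x,\partial)\in\cD_X[t]$ with
$$
b(t)\cdot f^\al\,f^t \;=\; Q(t,x,\partial)\cdot f^\al\, f^{t+1}.
$$
Specializing at $t=-k-1$ with $k$ large enough that $b(-k-1)\ne 0$ (which holds for all but finitely many $k$) yields $f^{\al-k-1}\in\cD_X f^{\al-k}$, hence $\cD_X f^{\al-k-1}\subseteq \cD_X f^{\al-k}$. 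Iterating, the module $\cD_X f^{\al-k}$ contains $\cD_X f^{\al-k'}$ for every $k'\ge k\gg 0$, and therefore $j_*(\cD_W f^\al)=\cD_X f^{\al-k}$ for all such $k$.

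Finally, since the $\cD$-module direct image of a regular holonomic module under an open immersion of smooth algebraic varieties is again regular holonomic, $\cD_X f^{\al-k}$ is a regular holonomic $\cD_X$-module. The compatibility of the de Rham functor with the $\cD$-module direct image along $j$ (standard for regular holonomic modules) then gives
$$
DR_X(\cD_X f^{\al-k}) \;=\; DR_X\bigl(j_*(\cD_W f^\al)\bigr) \;=\; Rj_*\bigl(DR_W(\cD_W f^\al)\bigr) \;=\; Rj_*(L_\lam[n]),
$$
which is the desired identity. The only step that genuinely needs care is the Bernstein--Sato stabilization; but once the twisted polynomial $b(t)$ is available, the finite-zero-avoidance argument is elementary, so I expect no real obstruction.
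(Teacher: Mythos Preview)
Your proposal is correct and follows essentially the same route as the paper: identify $\cD_W f^{\al}$ as the regular holonomic connection with $DR_W=L_\lam[n]$, realize $j_*(\cD_W f^\al)$ as the increasing union $\bigcup_k \cD_X f^{\al-k}$, stabilize this union via the Bernstein--Sato functional equation for the section $f^\al$, and then invoke preservation of regular holonomicity under $j_*$ together with the compatibility $DR_X\circ j_* \simeq Rj_*\circ DR_W$. The only difference is cosmetic---you spell out the specialization $t=-k-1$ with $b(-k-1)\neq 0$ explicitly, whereas the paper simply cites the existence of a non-zero Bernstein--Sato polynomial and refers to \cite{be}.
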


\begin{cor} Let  $\al\in\bC^r$ and $\lam=\Exp(\al)\in(\bC^*)^r$. Suppose that $Re(\al_i)\ll 0$ for all $1\le i\le r$. Then
$$
 \al\in\Sigma^{i-n}_k(f) \iff \lam \in \cV^i_k(U).
$$
\end{cor}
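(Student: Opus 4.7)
My plan is to deduce the corollary by combining Proposition \ref{propED}, Lemma \ref{lemE}, and the Riemann-Hilbert compatibility of $u^\bigstar$ with the ordinary topological inverse image. The hypothesis $Re(\al_i)\ll 0$ is exactly what lets us bypass the integer shift by $k$ appearing in Proposition \ref{propED}: given such $\al$, we write $\al=\al_0-k$ with $k\in\bZ_{>0}$ as large as we please and $\al_0\in\bC^r$ chosen correspondingly, so that Proposition \ref{propED} applied to $\al_0$ and $k$ yields that $\cD_X[s]f^s|_{s=\al}$ is regular holonomic with
\[
DR_X\!\left(\cD_X[s]f^s|_{s=\al}\right)=Rj_*(L_\lam[n]),
\]
where $\lam=\Exp(\al)=\Exp(\al_0)$ because $k$ is an integer.

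Next I would verify that under the Riemann-Hilbert correspondence, $u^\bigstar$ corresponds to the ordinary inverse image $u^{-1}$, i.e.\ $DR_O\circ u^\bigstar\cong u^{-1}\circ DR_X$ on $D^b_{rh}(\cD_X)$. The functor $u^+=Lu^*[-n]$ intertwines via $DR$ with the exceptional pullback $u^!$ on constructible sheaves, as one sees on the test object $M=\cO_X$: $DR_O(u^+\cO_X)=\bC_O[-n]=u^!(\bC_X[n])=u^!DR_X(\cO_X)$. Since $\bD_X$ corresponds to Verdier duality $\bD^{top}$, the composition $u^\bigstar=\bD_O\circ u^+\circ \bD_X$ corresponds to $\bD^{top}\circ u^!\circ\bD^{top}=u^{-1}$, the last equality being Verdier's interchange of $u^!$ and $u^{-1}$. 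Because $DR_O$ at the point $O$ merely forgets the $\cD$-module structure, taking $H^{i-n}$ and comparing dimensions gives
\[
\dim_\bC H^{i-n}\!\left(u^\bigstar\!\left(\cD_X[s]f^s|_{s=\al}\right)\right)=\dim_\bC H^{i-n}\!\left(u^{-1}Rj_*(L_\lam[n])\right).
\]
By definition the left dimension is $\ge k$ iff $\al\in\Sigma^{i-n}_k(f)$, and by Lemma \ref{lemE} the right dimension is $\ge k$ iff $\lam\in\cV^i_k(U)$, which yields the desired equivalence.

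The point requiring the most care is the shift bookkeeping in the Riemann-Hilbert translation of $u^\bigstar$. While the identification is standard, the sign and degree conventions for $DR$, $u^+$ and $\bD$ vary across references, so I would pin them down by checking on the constant $\cD$-module $\cO_X$ as above and then invoking compatibility of $DR$ with duality in a systematic way; one must also verify that the choice of $\al_0$ (and hence of $k$) in the opening reduction can be made uniformly, which is immediate since Proposition \ref{propED} only requires $k$ larger than a threshold depending on the Bernstein-Sato data.
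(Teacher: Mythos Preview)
Your proposal is correct and follows essentially the same route as the paper: invoke Proposition \ref{propED} to identify $DR_X(\cD_X[s]f^s|_{s=\al})$ with $Rj_*(L_\lam[n])$, use the Riemann-Hilbert compatibility $DR_O\circ u^\bigstar\cong u^{-1}\circ DR_X$, and conclude via Lemma \ref{lemE}. The paper simply asserts the functor isomorphism and cites the two inputs, whereas you supply the extra justification (the $u^+\leftrightarrow u^!$ and $\bD_X\leftrightarrow$ Verdier-duality checks, plus the $\al=\al_0-k$ reduction); your remark on the uniformity of the threshold is exactly what the paper records separately in the Remark following the corollary.
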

\begin{proof} By the Riemann-Hilbert correspondence, there is an isomorphism of functors
$$
DR_O\circ u^\bigstar \xrightarrow{\sim} u^{-1}\circ DR_X : D^b_{rh}(\cD_X)\ra D^b_c(O).
$$
We can omit $DR_O$, since the de Rham functor on a point is trivial. The claim then follows from Lemma \ref{lemE} and Proposition \ref{propED}.
\end{proof}

\begin{rmk}
The condition $Re(\al_i)\ll 0$ in the above corollary can be made uniform by Sabbah's result \cite{sab} on Bernstein-Sato ideals (see \cite[Proposition 3.2]{OTa}). More precisely, there exists a large integer $N$ only depending on $f$, such that the above proposition still holds if the condition $Re(\al_i)\ll 0$ is replaced by $Re(\al_i)\leq-N$. 
\end{rmk}

\noindent
{\it Proof of  Part 2 of Theorem \ref{thrmS}.} This follows immediately from the Corollary. \hfill $\square$

\medskip
\noindent
{\it Proof of  Part 1 of Theorem \ref{thrmS}.} Let $\sigma$ be a field automorphism of $\bC$, and write $\sigma(\bC)$ for $\bC$ viewed as an algebra over itself via $\sigma$.  For a $\bC$-algebra $A$, we denote by $\sigma(A)$ the $(\sigma(\bC),A)$-bialgebra $\sigma(\bC)\otimes_\bC A$. For an $A$-module $M$, let $\sigma(M)$ be the $\sigma(A)$-module $\sigma(A)\otimes_A M$. For example, if $m$ is the ideal $(s-\al)=(s_1-\al_1,\ldots, s_r-\al_r)$ in $\bC[s_1,\ldots,s_r]$, then $\sigma(m)$ is the ideal $(s-\sigma(\al))$.
For two $A$-modules $M$ and $N$, $\sigma(M\otimes_A N)=\sigma(M)\otimes_{\sigma(A)}\sigma(N)$, $\sigma(Hom _A(M,N))=Hom_{\sigma(A)}(\sigma(M),\sigma(N))$, etc. 

From now on, let $\sigma\in Gal(\bC/\bar{\qq})$. For a fixed $\al\in\bC^r$, let 
$$
k(\al)=\dim_{\bC} H^i\left(u^{\bigstar}\left(\cD_X[s]f^s\otimes_{\bC[s]}\bC[s]/(s-\al)\right)\right).
$$
Then
\begin{align*}
k(\al)&=\dim_{\sigma(\bC)}\sigma\left(H^i\left(u^{\bigstar}\left(\cD_X[s]f^s\otimes_{\bC[s]}\bC[s]/(s-\al)\right)\right)\right)\\
&=\dim_\bC H^i\left(\sigma\left(u^{\bigstar}\left(\cD_X[s]f^s\otimes_{\bC[s]}\bC[s]/(s-\al)\right)\right)\right)\\
&=\dim_\bC H^i\left(\sigma\left(\bD_O\circ u^+\circ \bD_X\left(\cD_X[s]f^s\otimes_{\bC[s]}\bC[s]/(s-\al)\right)\right)\right).
\end{align*}

If $Y$ is a smooth algebraic variety defined over $\bar{\qq}$ , then $\sigma$ defines an auto-equivalence on $D^b(\cD_Y)$ since $\sigma(\cO_Y)=\cO_Y$ and $\sigma(\cD_Y)=\cD_Y$. If $\cM$ is in $D^b(\cD_Y)$, then
 $$\sigma(\bD_Y(\cM))=\bD_Y(\sigma(\cM)),$$
since
\begin{align*}
\sigma(R\mathcal{H}om_{\cD_Y}(\cM,\cD_Y)\mathop{\otimes}_{\cO_Y}\omega_Y^{\otimes -1})&=R\mathcal{H}om_{\sigma(\cD_Y)}(\sigma(\cM),\sigma(\cD_Y))\mathop{\otimes}_{\sigma(\cO_Y)}\sigma(\omega_Y)^{\otimes -1}\\
&=R\mathcal{H}om_{\cD_Y}(\sigma(\cM),\cD_Y)\mathop{\otimes}_{\cO_Y}\omega_Y^{\otimes -1}.
\end{align*}

If $g:Y\to Z$ is a $\bar{\qq}$-morphism of smooth $\bar{\qq}$-varieties and $\cM$ is in $D^b(\cD_Z)$, then
$$
\sigma(g^+(\cM))=g^+(\sigma(\cM)).
$$ 
Indeed, 
$$
\sigma(\cD_{Y\to Z}\mathop{\otimes}_{g^{-1}(\cD_Z)} g^{-1}(\cM)) =\sigma(\cD_{Y\to Z})\mathop{\otimes}_{g^{-1}(\sigma(\cD_Z))} g^{-1}(\sigma(\cM)),
$$
and  $$\sigma(\cD_{Y\to Z})=\sigma(\cO_Y\mathop{\otimes}_{g^{-1}(\cO_X)}g^{-1}\cD_Z)=\sigma(\cO_Y)\mathop{\otimes}_{g^{-1}(\sigma(\cO_X))}g^{-1}(\sigma(\cD_Z))=\cD_{Y\to Z}.$$
 
 Hence,
 \begin{align*}
k(\al)&=\dim_\bC H^i\left(\bD_O\circ u^+\circ \bD_X\circ \sigma\left(\cD_X[s]f^s\otimes_{\bC[s]}\bC[s]/(s-\al)\right)\right)\\
&=\dim_\bC H^i\left(u^\bigstar\circ \sigma\left(\cD_X[s]f^s\otimes_{\bC[s]}\bC[s]/(s-\al)\right)\right).
\end{align*}

Since $X$ is affine, we can work with global sections instead of sheaves. Let $D_{X}$ be the global sections of $\cD_X$.
Then
$$
k(\al)=\dim_\bC H^i\left(u^\bigstar\left(\sigma(D_X[s]f^s)\otimes_{\bC[s]}\bC[s]/(s-\sigma(\al))\right)\right).
$$

We now use the fact that $f_i$ are defined over $\bar{\qq}$. By \cite[Lemma 10]{BO}, $\Ann_{D_{X}[s]}(f^s)$ is obtained from the $D_{\bar{\qq}^n}[s]$-module
$
\Ann_{D_{\bar{\qq}^n}[s]}(f^s)
$
by applying $.\otimes_{\bar{\qq}}\cc$.  Hence 
$$
\sigma(D_X[s]f^s) = D_X[s]f^s.
$$

Hence
$$
k(\al)=\dim_\bC H^i\left(u^\bigstar\left(D_X[s]f^s\otimes_{\bC[s]}\bC[s]/(s-\sigma(\al))\right)\right).
$$
This proves that $\Sigma^i_k(f)$ is invariant under $Gal (\bC/\bar{\qq})$. \hfill $\square$

\section{Exponential maps}
Let $\Exp: \cc^r\to (\cc^*)^r$ be the exponential map defined by $\alpha\mapsto \exp(2\pi i\alpha)$. The following is proved in the Appendix of \cite{BW}. 

\begin{theorem}\label{thmbw}{\rm \cite{BW}}. 
Let $S$ be a closed irreducible subvariety of $\cc^n$ defined over $\bar{\qq}$, and let $T$ be a closed irreducible subvariety of $(\cc^*)^n$ defined over $\bar{\qq}$. Suppose that $\dim S=\dim T$ and $\Exp(S)\subset T$. Then $T$ is a torsion translate of a subtorus. 
\end{theorem}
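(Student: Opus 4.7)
The plan combines a functional-transcendence input for the exponential map with the classical Gelfond--Schneider theorem, in three steps.

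First, I would exploit the algebraicity of $S$ together with the dimension hypothesis to pin down its shape. Since $\Exp:\cc^n\to (\cc^*)^n$ is a local biholomorphism, every irreducible complex analytic component $\tilde T$ of $\Exp^{-1}(T)$ has $\dim \tilde T = \dim T = \dim S$. Being irreducible, $S$ lies in some such $\tilde T$, and the equality of dimensions together with the irreducibility of $\tilde T$ forces $S = \tilde T$ as analytic sets; in particular this analytic component is itself algebraic. I would then invoke the Ax--Lindemann theorem for $(\cc^*)^n$ (which is closely tied, via the Pila--Zannier strategy, to Laurent's strong results \cite{mm} on the Manin--Mumford problem for tori): the maximal algebraic subvariety of an irreducible analytic component of $\Exp^{-1}(T)$ must be a translate $\gamma + L$ of a $\qq$-rational linear subspace $L \subset \cc^n$. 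Hence $S = \gamma + L$ with $L$ $\qq$-rational.

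Second, since $L$ is $\qq$-rational, its image $T_0 := \Exp(L)$ is an algebraic subtorus of $(\cc^*)^n$. Consequently $\Exp(S) = \Exp(\gamma)\cdot T_0$ is a translate of $T_0$, already algebraic, of dimension $\dim T$. By irreducibility of $T$ and the dimension hypothesis, $T$ equals the Zariski closure of $\Exp(S)$, so $T = \Exp(\gamma)\cdot T_0$ is a translate of a subtorus.

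Third, to upgrade ``translate'' to ``torsion translate'', I would use the $\bar{\qq}$-structure of $S$. Since $S(\bar{\qq})$ is Zariski dense in $S$, one can choose $\gamma \in S(\bar{\qq})$. Let $\Lambda \subset \zz^n$ denote the character lattice of $(\cc^*)^n/T_0$. The coset $T$ is cut out by the equations $x^m = \Exp(\gamma)^m$ for $m \in \Lambda$; hence each value $\Exp(\gamma)^m = e^{2\pi i (m\cdot\gamma)}$ lies in $\bar{\qq}^*$ because $T$ is $\bar{\qq}$-defined. Meanwhile $m\cdot\gamma \in \bar{\qq}$ since $m \in \zz^n$ and $\gamma \in \bar{\qq}^n$. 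The Gelfond--Schneider theorem then forces $m\cdot\gamma \in \qq$ for every $m \in \Lambda$, so each $\Exp(\gamma)^m$ is a root of unity and $\Exp(\gamma)$ is torsion modulo $T_0$. Hence $T$ is a torsion translate of $T_0$.

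The main obstacle is the first step: concluding that $S$ is forced to be a $\qq$-rational affine subspace is a non-trivial functional-transcendence statement, and it is here that the deep theorem of Laurent \cite{mm} (equivalently, Ax--Lindemann for $(\cc^*)^n$) is indispensable. Once that structural result is secured, the reduction to the torsion case is a clean application of Gelfond--Schneider.
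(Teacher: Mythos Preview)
The paper does not itself prove this statement; it is quoted from the Appendix of \cite{BW}. What the paper does prove is the generalization Theorem~\ref{goodexpo}, so the natural comparison is with that argument.

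Your three-step argument is correct, with one caveat on attribution. The functional-transcendence input in Step~1---that any maximal irreducible algebraic subvariety of $\Exp^{-1}(T)$ is a translate of a $\qq$-rational linear subspace---is the Ax--Lindemann theorem for $(\cc^*)^n$, a consequence of Ax's 1971 functional Schanuel theorem. It is \emph{not} a consequence of Laurent's theorem \cite{mm}, which is the Manin--Mumford statement for tori (Theorem~\ref{dense} here). These are distinct results, and Step~1 should cite Ax rather than Laurent.

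The route taken in the paper for Theorem~\ref{goodexpo} (and, one expects, in \cite{BW} for the present statement) is genuinely different. It never determines the shape of $S$. Instead it treats zero-dimensional components of $T$ directly via Gelfond--Schneider (showing that a $\bar{\qq}$-point of $S$ mapping to an isolated $\bar{\qq}$-point of $T$ must be rational), and then reduces higher-dimensional components to that case by slicing $T$ with torsion-translated subtori of complementary dimension: each slice produces torsion points on the component, and once these are seen to be Zariski dense, Laurent's theorem (Theorem~\ref{dense}) forces the component to be a torsion-translated subtorus. So the paper uses Laurent where you use Ax--Lindemann. Your approach is more direct under the specific hypotheses of Theorem~\ref{thmbw}, where $S$ is an honest algebraic variety and Ax--Lindemann applies; the paper's slicing argument is designed for the weaker hypotheses of Theorem~\ref{goodexpo}, where $S$ is only a $Gal(\cc/\bar{\qq})$-stable set and no Ax--Lindemann statement is available.
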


Here we need a stronger version of the above theorem. Before stating the new result, we recall a powerful theorem about the arithmetic of affine tori. 

\begin{theorem}\label{dense}\cite{mm}
Let $Z$ be an irreducible subvariety of $(\cc^*)^n$ defined over $\bar{\qq}$. Suppose the set of torsion points on $Z$ is Zariski dense. Then $Z$ is a torsion translate of a subtorus. \end{theorem}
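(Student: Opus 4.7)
The plan is to combine a Galois / Chebotarev argument with a rigidity statement for subvarieties of $(\cc^*)^n$ stable under power isogenies. First I would reduce to the case that $Z$ is not contained in any proper torsion coset of $(\cc^*)^n$; otherwise translate by a torsion point into the smaller ambient subtorus, observe that the image of $Z$ still has Zariski-dense torsion, and argue by induction on $n$.

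The main step uses the cyclotomic character of Galois. Fix a number field $K$ over which $Z$ is defined. For any rational prime $\ell$ admitting a degree-one prime of $K$ above it (a positive-density set of primes, by Chebotarev), the corresponding Frobenius $\sigma_\ell \in \mathrm{Gal}(\bar{\qq}/K)$ acts on every root of unity of order coprime to $\ell$ as $\zeta \mapsto \zeta^\ell$. Since $Z$ is $K$-defined, $\sigma_\ell(Z)=Z$, so for every torsion $\zeta \in Z$ of order coprime to $\ell$ the point $\zeta^\ell = [\ell](\zeta)$ also lies in $Z$. The reduction guarantees that these $\ell$-coprime torsion points remain Zariski dense in $Z$: otherwise the Zariski closure of torsion whose order is divisible by $\ell$ would force $Z$ into a proper torsion coset, contradicting the reduction. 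Hence $[\ell](Z)\subseteq Z$, and since $[\ell]$ is a finite surjective isogeny of $(\cc^*)^n$ and $Z$ is irreducible, in fact $[\ell](Z)=Z$.

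The final step is a rigidity claim: an irreducible closed subvariety of $(\cc^*)^n$ that is stable under the isogeny $[\ell]$ for some (equivalently, for infinitely many) prime $\ell$ must be a torsion translate of a subtorus. Concretely, pick any torsion point $\zeta \in Z$ and pass to the iterate $[\ell^k]$ fixing $\zeta$; translating $\zeta$ to the identity, the translated variety passes through the identity and is invariant under $[\ell^k]$, whose differential at the identity is scalar multiplication by $\ell^k$ on the Lie algebra. Strong scaling invariance of this sort forces the analytic germ at the identity to be linear, hence the germ of a subtorus, and irreducibility promotes this to all of the translated $Z$.

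The main obstacle is precisely this rigidity step: passing from numerical $[\ell^k]$-invariance to algebraicity and linearity of the germ. Laurent's original proof in \cite{mm} handles this using Baker's theorem on linear forms in logarithms of algebraic numbers; alternative routes of Hindry and Hrushovski replace Baker's theorem by Galois-equivariance plus specialization to finite characteristic or by model-theoretic techniques. Any of these is substantially deeper than the Chebotarev setup above, which is why the paper invokes Laurent's theorem as a black box rather than reproving it.
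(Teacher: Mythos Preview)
The paper does not prove this statement at all: it is quoted verbatim from Laurent \cite{mm} and used as a black box in the proof of Theorem~\ref{goodexpo}. So there is no ``paper's own proof'' to compare against; your final paragraph already recognizes this.

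What you have written is a rough outline of the Bogomolov--Hindry approach (Galois acting through the cyclotomic character, hence through power maps, followed by a rigidity statement for $[\ell]$-stable subvarieties), which is indeed a known alternative to Laurent's original argument via Baker's theorem. Two comments on the sketch itself. First, the density step has a genuine gap: from ``$Z$ is not contained in a proper torsion coset'' you cannot conclude that the torsion of order coprime to $\ell$ is Zariski dense in $Z$. Irreducibility of $Z$ only gives that \emph{either} the prime-to-$\ell$ torsion \emph{or} the torsion of order divisible by $\ell$ is dense, and the latter set has no coset structure, so the parenthetical justification does not go through. The standard fixes use the full cyclotomic character (not just a single Frobenius) or combine several primes, and are somewhat more delicate than what you wrote. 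Second, you are right that the rigidity step is where the real depth lies; the germ argument you sketch (scaling invariance forces linearity) is morally correct but making it rigorous is exactly the content of the theorem, whether via Baker, via Hindry's degree estimates, or via Hrushovski's methods. In short, your proposal correctly surveys the landscape and locates the hard point, but is not itself a proof, which matches the paper's choice to invoke \cite{mm} rather than argue it out.
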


The following, like Theorem \ref{thmbw}, is generalization of the classical Gelfond-Schneider theorem, which says that if $\alpha$ and $e^{2\pi i\alpha}$ are both algebraic numbers, then $\alpha\in \qq$. The difference with respect to Theorem \ref{thmbw} is that one drops the requirement that  $S$ must be Zariski closed. This has to be compensated by invariance under a Galois action.

\begin{theorem}\label{goodexpo}
Let $S\subset \cc^r$ be a subset  which is preserved under the obvious action by $Gal(\cc/\bar{\qq})$. Let $T\subset (\cc^*)^r$ be a closed subvariety (not necessarily irreducible) defined over $\bar{\qq}$. Suppose that for any point $t\in T$ there is an open ball $N\subset \cc^r$ such that,
\begin{enumerate}
\item $t\in \Exp(N)$;
\item $\Exp$ induces an isomorphism $N\cong \Exp(N)$; and
\item $\Exp(S\cap N)=T\cap \Exp(N)$.
\end{enumerate}
Then each irreducible component of $T$ is a torsion translated subtorus. 
\end{theorem}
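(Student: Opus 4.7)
The plan is to reduce, one irreducible component of $T$ at a time, to Theorem~\ref{thmbw}. Fix an irreducible component $T_0 \subset T$ of dimension $d$; since $\bar{\qq}$ is algebraically closed, $T_0$ is itself defined over $\bar{\qq}$. It suffices to produce an irreducible $\bar{\qq}$-subvariety $Z \subset \cc^r$ with $\dim Z = d$ and $\Exp(Z) \subset T_0$, for then Theorem~\ref{thmbw} forces $T_0$ to be a torsion translated subtorus.

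To build $Z$, I pick a smooth point $t_0 \in T_0$ lying on no other component, and choose a ball $N$ satisfying (1)--(3), shrunk so that $\Exp(N) \cap T = \Exp(N) \cap T_0$ and $\Exp(N)$ meets $T_0$ only in its smooth locus. Setting $V := S \cap N$, the biholomorphism $\Exp : N \xrightarrow{\sim} \Exp(N)$ identifies $V$ with $T_0 \cap \Exp(N)$, so $V$ is a pure $d$-dimensional closed analytic subset of $N$. The proper $\bar{\qq}$-subvarieties of $\cc^r$ of dimension $<d$ form a countable family, since $\bar{\qq}[x_1,\ldots,x_r]$ is a countable ring; each such subvariety meets $V$ in an analytic subset of dimension $<d$, hence in a nowhere-dense subset. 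Baire category, applied in the locally compact space $V$, produces $\alpha \in V$ avoiding all of them. Let $Z := Z_\alpha$ be the smallest $\bar{\qq}$-subvariety of $\cc^r$ containing $\alpha$; by construction $Z$ is $\bar{\qq}$-irreducible and $\dim Z \ge d$.

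The reverse bound $\dim Z \le d$ is the crux of the proof, and the step where the Galois invariance of $S$ is used. The Galois orbit $Gal(\cc/\bar{\qq}) \cdot \alpha$ is Zariski-dense in $Z$ by construction, but what will really be needed is \emph{analytic} density. A second Baire argument supplies this: the complement $Z \setminus (Gal(\cc/\bar{\qq}) \cdot \alpha)$ equals the union of the proper $\bar{\qq}$-subvarieties of the irreducible variety $Z$, a countable family of analytically nowhere-dense subsets. Restricting to the open set $N$, the orbit intersected with $N$ is then analytically dense in $Z \cap N$. By the invariance hypothesis, this intersection lies in $V$; since $V$ is analytically closed in $N$, I conclude $Z \cap N \subset V$, which forces $\dim Z \le d$ and hence $\dim Z = d$. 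This dimension inequality is the main obstacle, as Zariski density would not suffice to pass through the analytically closed $V$.

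To conclude, $\Exp(Z \cap N) \subset \Exp(V) \subset T_0$. The analytic subset $\Exp^{-1}(T_0) \cap Z$ of the irreducible variety $Z$ contains the nonempty open subset $Z \cap N$, and must therefore equal all of $Z$. Thus $\Exp(Z) \subset T_0$, and Theorem~\ref{thmbw} applies to $Z$ and $T_0$, completing the proof.
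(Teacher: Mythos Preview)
Your argument is correct, and it takes a genuinely different route from the paper's own proof. Both proofs hinge on the same key observation---that the $Gal(\cc/\bar\qq)$-orbit of a point $\alpha$ coincides with the set of very general points of its $\bar\qq$-Zariski closure $Z_\alpha$, and is therefore analytically dense there---but they deploy it differently. The paper first treats the case where the component $T'$ is a single point: it shows that if the preimage $p$ had a transcendental coordinate then $Z_p$ would be positive-dimensional and, via the density of the orbit in $S$, would force a positive-dimensional piece of $T$ through the supposed isolated point; hence $p\in\bar\qq^r$, and Gelfond--Schneider gives that $q=\Exp(p)$ is torsion. For a component $T'$ of positive dimension $d$, the paper slices by fibers of a generic group homomorphism $(\cc^*)^r\to(\cc^*)^d$ over torsion points; each fiber is a torsion-translated subtorus meeting $T'\setminus T''$ in finitely many points, which the zero-dimensional argument forces to be torsion. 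This makes the torsion points Zariski dense in $T'$, and Laurent's Theorem~\ref{dense} finishes.

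Your approach is more uniform: you bypass both the case split and the slicing, instead manufacturing in one stroke an irreducible $\bar\qq$-variety $Z$ with $\dim Z=\dim T_0$ and $\Exp(Z)\subset T_0$, so that Theorem~\ref{thmbw} applies directly. The two Baire arguments---one to push $\dim Z$ up to $d$, one to push it back down via $Z\cap N\subset V$---are a clean way to package the transcendence input. The trade-off is that your proof treats Theorem~\ref{thmbw} as a black box (itself proved in \cite{BW} via Laurent's theorem), whereas the paper's argument makes the roles of Gelfond--Schneider and Laurent explicit and is in that sense more self-contained. Either way the deep input is the same; your version is shorter.
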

\begin{proof} Let $T'$ be an irreducible component of $T$. We first prove the case when $T'=\{q\}$ is a closed point. Since $T$ is a variety defined over $\bar{\qq}$, $q$ must have all the coordinates in $\bar{\qq}$. By the assumption in the theorem, there is a point $p\in S\cap N$ such that $\Exp(p)=q$. Thanks to the Gelfond-Schneider theorem, it suffices to show that all the coordinates of $p$ are in $\bar{\qq}$. 

Let $Y\subset \cc^r$ be the smallest closed subvariety defined over $\bar{\qq}$ which contains $p$. By definition, $Y$ is irreducible.
\begin{lemma}
Suppose $p=(p_1, \ldots, p_r)\in \cc^r$. Let $R=\bar{\qq}[p_1, \ldots, p_r]\subset\cc$ be the subring of $\cc$ generated by the coordinates of $p$. Then $Y_{\bar{\qq}}\cong \spec(R)$, where $Y_{\bar{\qq}}$ is the underlying $\bar{\qq}$-variety of $Y$. 
\end{lemma}
\begin{proof}
Denote the coordinates of $\cc^r$ by $(z_1, \ldots, z_r)$. Let the coordinate ring of $Y_{\bar{\qq}}$ be $R'$. Define a ring map $\phi: R'\to \cc$ by $g\mapsto g(p)$. Since $R'$ is a quotient ring of $\bar{\qq}[z_1, \ldots, z_r]$, $\phi(R')\subset R$. Conversely, for each $1\leq i\leq r$, the coordinate function $z_i$ defines a regular function on $Y_{\bar{\qq}}$. Since $z_i(p)=p_i$, $p_i\in \phi(R')$ for all $1\leq i\leq r$. Thus $R\subset \phi(R')$, and hence $R=\phi(R')$. 

Since $Y_{\bar{\qq}}$ is the smallest $\bar{\qq}$-subvariety of $\bar{\qq}^n$ whose complexification contains $p$, the kernel of $\phi$ is zero. \end{proof}

Suppose at least one of the coordinates of $p$ is transcendental. Then $Y$ has dimension at least one. 

\begin{lemma}
For a very general point $p'\in Y$, there exists $\sigma\in Gal(\cc/\bar{\qq})$ such that $\sigma(p)=p'$. 
\end{lemma}
\begin{proof}
By the preceding lemma, a point $p'\in Y$ corresponds to a ring map $p': R\to \cc$. Given any nonzero $g\in R$, we can consider it as a function on $Y$ via $Y_{\bar{\qq}}\cong \spec(R)$. Hence we can define the zero locus $V(g)\subset Y$, which is a proper subvariety of $Y$. 

Notice that $R$ has countably many elements. Thus, we can define a closed point $p'\in X$ to be very general if it is not contained in any $V(g)$ with $0\neq g\in R$. Clearly such a very general point $p'$ defines an injective ring map $p': R\to \cc$. Denote the quotient field of $R$ by $K$, then a very general $p'$ induces a ring map $p': K\to \cc$. By the existence of transcendental basis (which assumes axiom of choice), such a ring map can be extended to an automorphism $\sigma$ of $\cc$. Now it is straightforward to check that $\sigma(p)=p'$. 
\end{proof}

By the above Lemma, a very general point $p'\in Y\cap N$ is contained in $S$. Since $\Exp(S\cap N)=T\cap \Exp(N)$, a very general point in $\Exp(Y\cap N)$ is contained in $T$. Since $T$ is analytically closed and $\Exp$ is a covering map, $\Exp(Y\cap N)\subset T$. Therefore $T$ contains a component of dimension greater than zero through $q$. This is a contradiction to $T'=\{q\}$ being an isolated point. Hence $p$ is defined over $\bar{\qq}$.  This finishes the proof of the Theorem for the case when $T'$ is a point.

Suppose now that $T'$ is higher dimensional. Since it is an irreducible component of $T$ and $T$ is defined over $\bar{\qq}$, $T'$ is also defined over $\bar{\qq}$. Denote the union of all the other components of $T$ by $T''$, that is $T''=\overline{T\setminus T'}$.

Notice that for a torsion translated subtorus $V\subset (\cc^*)^r$, $V$ is defined over $\bar{\qq}$ and each connected component of $\Exp^{-1}(V)$ is an irreducible subvariety of $\cc^r$ defined over $\bar{\qq}$. When $T'$ is higher dimensional, we can use the above argument to show that whenever a torsion translated subtorus $V\subset (\cc^*)^n$ intersects $T'\setminus T''$ at discrete points, the points have to be torsion. 

Let $d$ be the dimension of $T'$. Let $\pi: (\cc^*)^r\to (\cc^*)^d$ be a general surjective map of algebraic groups such that $\pi(T')$ is Zariski dense in $(\cc^*)^d$. Let $U\subset (\cc^*)^d$ be a Zariski open set above which $\pi|_{T'\setminus T''}$ is surjective and \'etale. For any torsion point $\tau\in U$, $\pi^{-1}(\tau)$ is a torsion translated subtorus in $(\cc^*)^r$ and its intersection with $T'\setminus T''$ is transverse and non-empty. By the earlier argument, these intersection points are all torsion points in $(\cc^*)^r$. Therefore,
$$\pi(\{\text{torsion points in } T'\setminus T''\})\supset \{\text{torsion points in }U\}.$$
Since the dimension of $T'$ is $d$, the torsion points in $T'$ is also Zariski dense. By Theorem \ref{dense}, $T'$ is a torsion translate of a subtorus. This finishes the proof of Theorem \ref{goodexpo}.

\end{proof}

\section{The rest of the proofs}

\begin{theorem}\label{thrmMain} Let $U$ be the small ball complement of the germ of a holomorphic function $f:(\bC^n,0)\ra (\bC,0)$. Then each irreducible component of $\sV^i_k(U)$ is a torsion translate of subtorus of $\mb(U)$. 
\end{theorem}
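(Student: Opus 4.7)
The plan is to assemble the pieces developed in Sections 2--4 exactly as outlined in the introduction. First, I would apply Theorem \ref{Qbar2} to replace $f$ with a product $h = \prod_{i=1}^{r} h_i$ of irreducible polynomial functions on $\bC^n$ defined over $\bar{\qq}$, such that the small ball complement $U$ of $f$ is homotopy equivalent to $B \setminus h^{-1}(0)$ and the restriction map $\mb(W) \to \mb(B \setminus h^{-1}(0))$ is an isomorphism, where $W = \bC^n \setminus h^{-1}(0)$. Since $\cV^i_k(U)$ is a homotopy invariant, this reduction loses no information and places us in the setup of Section \ref{dmod}.

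Next, I would take $T$ to be $\cV^i_k(U) \subset \mb(U) \cong (\bC^*)^r$. As noted in the introduction, $\cV^i_k(U)$ is an affine $\bZ$-scheme of finite type, so $T$ is a closed subvariety of $(\bC^*)^r$ defined over $\bZ \subset \bar{\qq}$. I would take $S = \Sigma^{i-n}_k(h) \subset \bC^r$, the $\cD$-module cohomology jump locus defined in Section \ref{dmod}.

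Then the entire statement follows by invoking Theorem \ref{goodexpo} on this pair $(S,T)$. The two required hypotheses are delivered by Theorem \ref{thrmS}: Part (1) says precisely that $S$ is preserved under the action of $Gal(\bC/\bar{\qq})$, and Part (2) produces, for every $\lambda \in T$, an open ball $N \subset \bC^r$ such that $\Exp$ restricts to a biholomorphism $N \xrightarrow{\sim} \Exp(N)$ with $\lambda \in \Exp(N)$ and $\Exp(S \cap N) = T \cap \Exp(N)$. Theorem \ref{goodexpo} then asserts that every irreducible component of $T = \cV^i_k(U)$ is a torsion translated subtorus of $\mb(U)$, which is the conclusion of Theorem \ref{thrmMain}.

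The only point that needs a bit of care is purely bookkeeping: since the results of Section \ref{dmod} are formulated under the assumption that $\mb(W) \xrightarrow{\sim} \mb(U)$, one must invoke Theorem \ref{Qbar2}(3) to guarantee this compatibility after the reduction, so that the identification of $\mb(U)$ with $(\bC^*)^r$ via monodromies around the branches of $h$ is the same on both sides. Beyond that, there is no genuine obstacle at this stage --- the hard content sits in the preceding sections: the deformation to a $\bar{\qq}$-defined polynomial (Section 2), the Riemann--Hilbert based comparison between $\Sigma^i_k$ and $\cV^i_k$ together with Galois invariance (Section 3), and the arithmetic input from M.\ Laurent's theorem underlying Theorem \ref{goodexpo} (Section 4). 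The present proof is simply their synthesis.
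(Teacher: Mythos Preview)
Your proposal is correct and follows exactly the paper's own argument: the paper's proof of Theorem \ref{thrmMain} is the single sentence ``The claim follows from Theorem \ref{Qbar2}, Theorem \ref{thrmS}, and Theorem \ref{goodexpo},'' and you have simply unpacked how these three results fit together, including the bookkeeping point about Theorem \ref{Qbar2}(3) ensuring the hypotheses of Section \ref{dmod} are met.
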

\begin{proof}
The claim follows from Theorem \ref{Qbar2}, Theorem \ref{thrmS}, and Theorem \ref{goodexpo}. 
\end{proof}

To conclude the stronger Theorem \ref{thrmMain0}, we need the following result of J. Fern\'andez de Bobadilla \cite{FdB2}:

\begin{thrm}\label{thrmBo} Let $(\sX,0)$ be the germ of an analytic set in $(\bC^n,0)$, say given as the common zero locus of the germs of analytic functions $f_i:(\bC^n,0)\ra (\bC,0)$ $(i=1,\ldots, r)$. Let $U$ be the small ball complement of $(\sX,0)$. Then $U$ is homotopy equivalent with the Milnor fiber at $0$ of $f: (\bC^{n+r},0)\ra (\bC,0)$, where
$$
f(x_1,\ldots,x_n,y_1,\ldots, y_n)=\sum_{i=1}^ry_if_i(x_1,\ldots,x_n).
$$
Moreover, the geometric monodromy of $f$ is trivial.
\end{thrm}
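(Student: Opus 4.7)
The plan is to exploit the fact that $f(x,y)=\sum_{i=1}^r y_if_i(x)$ is homogeneous of degree one in the variables $y=(y_1,\ldots,y_r)$. Fix a Milnor radius $\epsilon>0$ small enough for both $\sX\subset(\bC^n,0)$ and $f^{-1}(0)\subset(\bC^{n+r},0)$, and pick $0<|\delta|\ll\epsilon$, so that $F:=f^{-1}(\delta)\cap B_\epsilon$ is a representative of the Milnor fibre of $f$ at the origin, where $B_\epsilon$ is the open ball of radius $\epsilon$ in $\bC^{n+r}$. The first move is to project $\pi\colon F\to\bC^n$, $(x,y)\mapsto x$. Since $\sum y_if_i(x)=\delta\neq 0$ forces $f(x):=(f_1(x),\ldots,f_r(x))\neq 0$, automatically $\pi(F)\subset B_\epsilon^n\setminus\sX$, and a short Cauchy--Schwarz computation identifies
\[
\pi(F)=\bigl\{x\in B_\epsilon^n \mid |f(x)|^2(\epsilon^2-|x|^2)>|\delta|^2\bigr\},
\]
where $|f(x)|^2=\sum_i|f_i(x)|^2$: indeed the minimum of $|y|^2$ on the hyperplane $\sum y_if_i(x)=\delta$ equals $|\delta|^2/|f(x)|^2$ and is attained at $y_\star(x):=\delta\,\overline{f(x)}/|f(x)|^2$.

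Next I collapse the fibres of $\pi$. Over each $x\in\pi(F)$, the fibre $\pi^{-1}(x)$ is the intersection of a complex affine hyperplane in $\bC^r$ with an open Euclidean ball, hence a nonempty open convex set, and $s(x):=(x,y_\star(x))$ is a continuous section of $\pi$. The fibrewise straight-line homotopy from $(x,y)$ to $s(x)$ stays inside $F$, by convexity of both the hyperplane and the ball, and provides a deformation retraction $F\simeq s(\pi(F))\cong\pi(F)$.

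The step I expect to be the main obstacle is identifying $\pi(F)$ up to homotopy with the small ball complement $U\simeq B_\epsilon^n\setminus\sX$. Set $\psi(x):=|f(x)|^2(\epsilon^2-|x|^2)$; this is a non-negative real-analytic function on $\bar B_\epsilon^n$, vanishing precisely on $(\sX\cap\bar B_\epsilon^n)\cup\partial B_\epsilon^n$ and strictly positive on $B_\epsilon^n\setminus\sX$. Then $\pi(F)=\{\psi>|\delta|^2\}\subset\{\psi>0\}=B_\epsilon^n\setminus\sX$, and I need this inclusion to be a homotopy equivalence for all sufficiently small $|\delta|$. The natural approach is gradient flow of $\psi$: the {\L}ojasiewicz gradient inequality for real-analytic functions on the compact set $\bar B_\epsilon^n$, together with a real-analytic Sard-type argument that the critical values of $\psi$ accumulating at $0$ form an at most countable set, should allow one to choose $|\delta|^2$ as a regular value and integrate $\nabla\psi/\|\nabla\psi\|^2$ for finite time, retracting $\{\psi>0\}$ onto $\{\psi\geq|\delta|^2\}$. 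The delicate point is ensuring that the flow is complete and behaves well both near $\sX$ and near $\partial B_\epsilon^n$; this is precisely where a careful Milnor-type choice of $\epsilon$ and $\delta$ --- or equivalently a curve-selection or good-neighbourhood argument in the sense of Prill --- is needed.

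Finally, triviality of the geometric monodromy is built into the homogeneity of $f$ in $y$. The rotation $H_\theta(x,y):=(x,e^{2\pi i\theta}y)$ is an isometry of $\bC^{n+r}$, hence preserves every Euclidean ball at the origin, and satisfies $f\circ H_\theta=e^{2\pi i\theta}f$. Thus $H_\theta$ restricts to a diffeomorphism $f^{-1}(\delta)\cap B_\epsilon \xrightarrow{\sim} f^{-1}(e^{2\pi i\theta}\delta)\cap B_\epsilon$, giving a smooth trivialisation of the Milnor fibration over the circle $|z|=|\delta|$. The return map at $\theta=1$ is $H_1=\id$, so the geometric monodromy of $f$ at $0$ is trivial.
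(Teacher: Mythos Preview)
The paper does not actually prove this statement: it is quoted verbatim as a result of Fern\'andez de Bobadilla \cite{FdB2} and used as a black box in the proof of Theorem~\ref{thrmMain0}. So there is no ``paper's own proof'' to compare against.

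Your argument is the natural one, and it is essentially the argument in \cite{FdB2}. The projection $\pi\colon F\to\bC^n$ with convex fibres, the explicit section $s(x)=(x,\delta\,\overline{f(x)}/|f(x)|^2)$, and the resulting deformation retraction $F\simeq\pi(F)=\{\psi>|\delta|^2\}$ are all correct. Your monodromy argument via the isometric $\bC^*$-action $(x,y)\mapsto(x,e^{2\pi i\theta}y)$ is also correct and is exactly the point: homogeneity in $y$ gives an honest trivialisation of the fibration over the circle, with identity return map.

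On Step~4, the step you flag as delicate, your outline is sound but the appeal to gradient flow of $\psi$ is more subtle than necessary. A cleaner way to finish: the function $\psi(x)=|f(x)|^2(\epsilon^2-|x|^2)$ is a nonnegative real-analytic function on $\bar B_\epsilon^n$ with zero set exactly $(\sX\cap\bar B_\epsilon^n)\cup\partial B_\epsilon^n$; such a function is a \emph{rug function} in the sense of Durfee for this compact set. The general result that, for a real-analytic rug function, the sublevel complements $\{\psi>c\}$ are all homeomorphic for $0<c\ll 1$ and each is a deformation retract of $\{\psi>0\}$ follows from Thom's first isotopy lemma applied to a Whitney stratification of $(\bar B_\epsilon^n,\sX,\partial B_\epsilon^n)$, or equivalently from the existence of a controlled vector field transverse to the level sets near the zero locus. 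This avoids worrying about completeness of $\nabla\psi/|\nabla\psi|^2$ near the boundary. With that in place, $\{\psi>|\delta|^2\}\simeq B_\epsilon^n\setminus\sX\simeq U$ for $|\delta|$ small, and the proof is complete.
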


\begin{proof}[Proof of Theorem \ref{thrmMain0}] Denote by $F_f$ the Milnor fiber at the origin for the germ $f$ defined as in Theorem \ref{thrmBo}. Let $U_f$ be the small ball complement of $f$. Since the geometric monodromy of $f$ is trivial, there is a diffeomorphism
$$
U_f\approx F_f\times \Delta^*,
$$
where $\Delta^*$ is a small punctured disc. Hence $U_f$ has the same homotopy type as $U\times \Delta^*$. By K\"unneth Theorem,
$$
H^1(U\times \Delta^*,\bC^*)=H^1(U,\bC^*)\times H^1(\Delta^*,\bC^*),
$$
and so $$\mb(U_f)=\mb(U\times \Delta^*)=\mb(U)\times \mb(\Delta^*)=\mb(U)\times \bC^*.$$ Hence a rank one local system on $U\times \Delta^*$ is uniquely determined by a pair $(L_1,L_2)\in \mb(U)\times \mb(\Delta^*)$.

The only non-empty $V^i_k(\Delta^*)$ with $k>0$ are $V^0_1(\Delta^*)=V^1_1(\Delta^*)=\{\bone_{\Delta^*}\}$. Hence, by K\"unneth Theorem again,
$$
H^i(U\times \Delta^*, (L,\bone_{\Delta^*}))=H^i(U,L)\oplus H^{i-1}(U,L)
$$
and $H^\ubul=0$ for the other rank one local systems on $U\times \Delta^*$. 

This implies that
$$
V^i_k(U_f)=V^i_k(U\times \Delta^*)=\mathop{\bigcup_{k_1,k_2\text{ such that }}}_{k_1+k_2=k} (V^i_{k_1}(U)\cap V^{i-1}_{k_2}(U))\times\{\bone_{\Delta^*}\}.
$$

Take now an irreducible component $T$ of $V^i_k(U)$. We will prove that $T$ is a torsion translated subtorus of $\mb(U)$. We can assume that $T$ is not included in $V^i_{k'}(U)$ for any $k'>k$. Define $l$ such that $T\subset V^{i-1}_l(U)$ but $T\not\subset V^{i-1}_{l'}(U)$ for any $l'>l$. Then $T$ is an irreducible component of $V^i_k(U)\cap V^{i-1}_l(U)$. Moreover, by the maximality of $k$ and $l$, $T$ remains an irreducible component of
$$
\mathop{\bigcup_{k',l'\text{ such that }}}_{k'+l'=k+l} (V^i_{k'}(U)\cap V^{i-1}_{l'}(U)).
$$
Hence $T\times\{\bone_{\Delta^*}\}$ is an irreducible component of $V^i_{k+l}(U_f)$. By Theorem \ref{thrmMain}, we know that each irreducible component of $V^i_{k+l}(U_f)$ is a torsion translated subtorus of $\mb(U_f)$. The claim follows now easily.
\end{proof}

\begin{proof}[Proof of Theorem \ref{corInv}] It is enough to prove that  $\lam\in \cV^i_k(X)$ if and only if $\lam^{-1}\in \cV^i_k(X)$. Let $\lam\in \cV^i_k(X)$. We can assume $\lam$ sits in a component $\rho\cdot T$ of $\cV^i_k(X)$, where $T$ is a subtorus in $\mb(X)$ and $\rho$ is torsion. Since $\cV^i_k(X)$ is an affine scheme defined over $\bQ$, it is invariant under the action of the Galois group of $\bar{\bQ}$ over $\bQ$. Let $\sigma\in Gal(\bar{\qq}/\qq)$ be the action such that $\sigma(\rho)=\rho^{-1}$. Then $\sigma (\rho T)$ must be a subset of $\cV^i_k(X)$. But $\sigma (\rho T)=\rho^{-1}\sigma (T)=\rho^{-1} T$, since any affine algebraic subtorus of $(\bC^*)^r$ is also defined over $\bQ$. If $\lam=\rho\cdot t$, for some $t\in T$, then $\lam^{-1}=\rho^{-1}\cdot t^{-1}\in \rho^{-1} T=\sigma (\rho T)\subset \cV^i_k(X)$.
\end{proof}

The above proof is the essentially the same as the proof of \cite[Corollary 4.9]{D-ad}.

\begin{proof}[Proof of Proposition \ref{propEig}] 
Let $f=\prod_{i=1}^r{f_i}$ be a decomposition of $f$ into irreducible holomorphic germs. Let $F=(f_1,\ldots,f_r)$. By \cite[Proposition 3.31]{B}, see also \cite[Example 3.32]{B}, one has
$$
\Supp _0(\psi_f\bC)=a^{-1}(\Supp _0(\psi_F\bC)),
$$
using Sabbah's specialization complex $\psi_F\bC$, where $a:\bC^*\ra(\bC^*)^r$ is the map $\lam\mapsto (\lam,\ldots,\lam)$. The left-hand side is the set of eigenvalues of the monodromy on the cohomology of the Milnor fiber of $f$ at $0$.

If $f$ is a reduced germ, \cite[Theorem 4]{B} implies that
$$
\Supp _0(\psi_F\bC)=\cV(U),
$$
which is what we claimed.

If $f$ is not a reduced germ, we use  \cite[Lemma 3.34]{B} and, again, \cite[Proposition 3.31]{B}, to obtain a map $b:(\bC^*)^r\ra \mb(U)$, such that $f^*=b\circ a: \bC^*\ra \mb (U)$ and
$$
\Supp _0(\psi_F\bC)=b^{-1}(\cV(U)).
$$
This finishes the proof.
\end{proof}

\begin{rmk} Note that, as pointed out by Liu-Maxim \cite{LM}, in all the statements in \cite{B} where the uniform support $Supp^{unif}_x(\psi_F\bC)$ of the Sabbah specialization complex appears, the {\it unif} should be dropped to conform to what is proven in \cite{B}. Indeed, the support $\Supp _x(\psi_F \bC)$ needs no uniformization.
\end{rmk}

\begin{proof}[Proof of Theorem \ref{thrmSab}]
Again, \cite[Lemma 3.34]{B} and \cite[Proposition 3.31]{B} reduce the statement to the case when $f=\prod_jf_i$ defines a reduced germ and the $f_i$ define the mutually distinct analytic branches. In this case, as above, \cite[Theorem 4]{B} gives that $
\Supp_0(\psi_F\bC)=\cV(U),
$
and claim then follows from Theorem \ref{thrmMain}.
\end{proof}

\section{Singular ambient space}
So far, we have always assumed that the ambient space of the analytic germ is $(\cc^n, 0)$, or equivalently a smooth germ. The goal of this section is to study the complement of an analytic subgerm of a possibly singular analytic germ and to obtain a generalization of our main result. We are not able to prove complete analogs of the algebraization of analytic germs in this setting. So, we have to assume the ambient germ and the subgerm are both algebraic. We will prove the following analog of Theorem \ref{thrmMain0}: 

\begin{theorem}\label{singmain}
Let $(\mathscr{X}, O)$ be a germ of a smooth complex algebraic variety, and let $(\mathscr{Y}, O)\subset (\scrX, O)$ be the germ of a possibly singular irreducible subvariety.  Let $f: (\mathscr{X}, O)\to (\cc, 0)$ be a reduced algebraic function germ. Let $f=\prod_{1\leq j\leq r}f_j$ be an irreducible decomposition of $f$ as an analytic germ on $(\scrX, O)$. Let $B\subset \scrX$ be a small ball centered at $O$. Assume that $U=(\mathscr{Y}\cap B)\setminus f^{-1}(0)$ is smooth. Let $F^*: \mb((\cc^*)^r)\to \mb(U)$ be the map of local systems induced by $F=(f_j)_{1\leq j\leq r}: U\to (\cc^*)^r$. Then each irreducible component of $(F^*)^{-1}(\sV^i_k(U))$ is a torsion translated subtorus in $\mb((\cc^*)^r)=(\cc^*)^r$. 
\end{theorem}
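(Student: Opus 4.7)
The plan is to mirror the four-step proof of Theorem~\ref{thrmMain0}, with the single point $O$ replaced by the (possibly singular, positive-dimensional) exceptional fiber of a log resolution of $(\scrY,\scrY\cap f^{-1}(0))$. First, I would reduce to the case where $\scrX$, $\scrY$, and the analytic branches $f_j$ are all defined over $\bar\qq$, directly extending Propositions~\ref{irreduciblepolynomial}--\ref{Qbarpolynomial}: Theorem~\ref{algseries} makes the $f_j$ polynomial germs after a diffeomorphism, and then a common log resolution of $(\scrX,\scrY\cup f^{-1}(0))$ over a finitely generated $\qq$-subring $R\subset\cc$, together with density of $\bar\qq$-points in $\spec R$ and stratified topological triviality over a Zariski open near the initial $\cc$-point, yields a nearby $\bar\qq$-deformation with the same small-ball-complement topology. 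The map $F^*$ is determined by the combinatorics of the analytic branches of $f$ at $O$ and is therefore preserved. Fixing such a log resolution $\pi\colon\tilde\scrY\to\scrY$ over $\bar\qq$, write $\tilde f_j:=f_j\circ\pi$, $\tilde n:=\dim\tilde\scrY$, $Z:=\pi^{-1}(O)$, and $\tilde D:=\bigcup_j\tilde f_j^{-1}(0)$, a simple normal crossings divisor in smooth $\tilde\scrY$. Since $U$ is smooth, $\pi$ is an isomorphism over $U$, so for $B$ small enough $N:=\pi^{-1}(\scrY\cap B)$ is a good neighborhood of $Z$ in the sense of Prill, deformation retracting to $Z$, with $\tilde U:=N\setminus\tilde D\cong U$.

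Next, mimicking Section~\ref{dmod}, I consider the $\cD_{\tilde\scrY}[s]$-module $\cD_{\tilde\scrY}[s]\tilde f^s$ with $\tilde f^s:=\prod_j\tilde f_j^{s_j}$ and define
\[
\Sigma^i_k(f,\scrY,O):=\bigl\{\alpha\in\cc^r\mid\dim\bH^i\bigl(Z,\,u^{-1}DR_{\tilde\scrY}(\cD_{\tilde\scrY}[s]\tilde f^s|_{s=\alpha})\bigr)\ge k\bigr\},
\]
where $u\colon Z\hookrightarrow\tilde\scrY$. The Galois invariance argument of Part~1 of Theorem~\ref{thrmS} carries over verbatim on the smooth $\bar\qq$-variety $\tilde\scrY$, via the analog of \cite[Lemma~10]{BO}; so $\Sigma^i_k(f,\scrY,O)$ is preserved by $\mathrm{Gal}(\cc/\bar\qq)$. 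For the analog of Part~2, Sabbah's result on Bernstein--Sato ideals, made uniform on the compact $Z$ by coherence of the $V$-filtration, gives $M\gg 0$ such that whenever $Re(\alpha_j)\le -M$, the specialization $\cD_{\tilde\scrY}\tilde f^\alpha$ is regular holonomic with $DR_{\tilde\scrY}(\cdot)=Rj_*L_\lambda^{\tilde U}[\tilde n]$ for $\lambda:=\Exp(\alpha)$. Restricting to $Z$ and using the standard comparison $\varinjlim_{N\supset Z}\bH^*(N,G|_N)\cong\bH^*(Z,u^{-1}G)$ for constructible $G$ (a tube lemma consequence of the local conic structure), one gets
\[
\bH^{i-\tilde n}\bigl(Z,\,u^{-1}Rj_*L_\lambda^{\tilde U}[\tilde n]\bigr)=H^i(\tilde U,L_\lambda^{\tilde U})=H^i(U,F^*L_\lambda),
\]
hence open balls $N_\alpha\subset\cc^r$ on which $\Exp$ is biholomorphic onto its image and $\Exp(\Sigma^{i-\tilde n}_k(f,\scrY,O)\cap N_\alpha)=(F^*)^{-1}(\sV^i_k(U))\cap\Exp(N_\alpha)$.

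Finally, $T:=(F^*)^{-1}(\sV^i_k(U))$ is a closed subvariety of $(\cc^*)^r$ defined over $\bar\qq$, since the jump loci are defined over $\bZ$ and $F^*$ is defined over $\bar\qq$ after the algebraization step. Theorem~\ref{goodexpo} applied to $S=\Sigma^{i-\tilde n}_k(f,\scrY,O)$ and $T$ then gives that every irreducible component of $T$ is a torsion translated subtorus of $(\cc^*)^r$, as claimed. The main obstacle I expect lies in the Riemann--Hilbert step when $Z$ is positive-dimensional and singular: one must set up $u^\bigstar$ consistently on such $Z$ (via Kashiwara's equivalence with $\cD_{\tilde\scrY}$-modules supported on $Z$, or bypass it as above on the sheaf side) and ensure Sabbah's bound is uniform along compact $Z$; both are standard once formulated carefully.
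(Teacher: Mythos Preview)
Your overall strategy is sound and close in spirit to the paper's, but the implementation diverges in one place that creates a genuine gap, and it is worth seeing how the paper sidesteps it.

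\medskip
\textbf{The gap.} Your definition of $\Sigma^i_k(f,\scrY,O)$ bakes in the de Rham functor and analytic hypercohomology on the exceptional fibre $Z$:
\[
\Sigma^i_k=\{\alpha\mid \dim \bH^i(Z,\,u^{-1}DR_{\tilde\scrY}(\cD_{\tilde\scrY}[s]\tilde f^s|_{s=\alpha}))\ge k\}.
\]
You then assert that the Galois-invariance argument of Part~1 of Theorem~\ref{thrmS} ``carries over verbatim''. It does not: in Theorem~\ref{thrmS} the jump locus is defined \emph{purely via algebraic $\cD$-module functors} (there $u^\bigstar$, with target a point), and the de Rham functor appears only later, once, when comparing with $\cV^i_k$. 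The action of $\sigma\in Gal(\cc/\bar\qq)$ is transported through $\otimes$, $R\mathcal{H}om$, $u^+$, $\bD$ on algebraic $\cD$-modules; it is not transported through $DR$ followed by analytic $u^{-1}$ and analytic hypercohomology on a positive-dimensional $Z$. So with your definition as written the invariance step is not justified. You do flag this at the end, but it is not a peripheral technicality: it is exactly the hinge on which Theorem~\ref{goodexpo} swings.

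The fix you hint at---replacing $\bH^i(Z,u^{-1}DR(-))$ by a genuine $\cD$-module expression such as $H^i(a_+ R\Gamma_{[Z]}(-))$ for $a:\tilde\scrY\to\mathrm{pt}$, or using Kashiwara's equivalence for $\cD_{\tilde\scrY}$-modules supported on $Z$---would work, but then you must also supply the analytic comparison (algebraic vs.\ analytic hypercohomology along the proper $Z$) separately. That is doable but is extra work, not ``verbatim''.

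\medskip
\textbf{How the paper avoids this.} The paper does not resolve $\scrY$. It stays on the smooth ambient $\scrX$ and sets
\[
\Sigma^i_k(F,\scrY,O)=\{\alpha\mid \dim H^i(u^\bigstar\circ i^+(\cD_\scrX[s]f^s|_{s=\alpha}))\ge k\},
\]
where $i:\scrY\hookrightarrow\scrX$ and $u:\{O\}\hookrightarrow\scrY$. Both $i^+$ and $u^\bigstar$ are algebraic $\cD$-module functors on the smooth $\scrX$ (with $\cD$-modules on the singular $\scrY$ handled via Kashiwara, and the final target still a \emph{point}), so Galois invariance is as immediate as in Section~\ref{dmod}. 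The comparison with $\cV^i_k(U)$ is then a single Riemann--Hilbert step plus the base change $i^{!}Rj_*=Rq_*p^{!}$ for the cartesian square
\[
\xymatrix{
\scrY\setminus(f^{-1}(0)\cap\scrY)\ar[r]^{\ \ \ p}\ar[d]^{q} & \scrX\setminus f^{-1}(0)\ar[d]^{j}\\
\scrY\ar[r]^{i} & \scrX
}
\]
together with $p^{!}(L_\alpha[n])=p^{-1}(L_\alpha)[2m-n]$ since $p$ is a closed embedding of smooth varieties. No tube lemma, no positive-dimensional $Z$, no algebraic--analytic comparison beyond the standard one already used in Section~\ref{dmod}.

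\medskip
\textbf{Summary of the comparison.} Your route (resolve $\scrY$, work on smooth $\tilde\scrY$, pay with a positive-dimensional singular $Z$) and the paper's route (stay on smooth $\scrX$, pay with the singular closed embedding $i:\scrY\hookrightarrow\scrX$, but still land at a point) are dual trade-offs. The paper's choice keeps the target of the $\cD$-module functors zero-dimensional, which is what makes Galois invariance transparent and the Riemann--Hilbert step a one-liner. Your choice is viable, but to make it rigorous you must replace the $DR$-based definition of $\Sigma$ by an algebraic $\cD$-module one before invoking Galois, and then add a separate comparison argument; the word ``verbatim'' should be dropped.
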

\begin{rmk}
Using the good neighborhood argument in \cite[Section II.B]{pr}, one can show that the homotopy type of $U$ is independent of the embedding $(\mathscr{Y}, O)\subset (\scrX, O)$. However, since we do not assume $(\mathscr{Y}, O)$ is factorial, the factorization $f=\prod_{1\leq j\leq r}f_j$ even as functions on $(\mathscr{Y}, O)$ does depend on the embedding $(\mathscr{Y}, O)\subset (\scrX, O)$. 
\end{rmk}

Given the germ of a variety $(\scrY, O)$, its link is defined to be $\link(\scrY)=\partial B\cap \scrY$ where $B$ is a small ball centered at $O$ and $\partial B$ is the boundary sphere of $B$. The homotopy type of $\link(\scrY)$ does not depend on the choice of $B$. An immediate consequence of the above theorem is the following. 
\begin{cor}
Let $(\scrY, O)$, $f=\prod_{1\leq j\leq r}f_j$ and $U$ be defined as in the above theorem. Suppose $(\scrY, O)$ has isolated singularity and suppose that $\link(\scrY)$ is simply connected. If the zero loci of $f_j$ in $(\scrY, O)$ are distinct irreducible divisors, then each irreducible component of $\sV^i_k(U)$ is a torsion translated subtorus. 
\end{cor}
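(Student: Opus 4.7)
The plan is to deduce the corollary from Theorem \ref{singmain} by showing that, under the stated hypotheses, the pullback map $F^*\colon \mb((\cc^*)^r) \to \mb(U)$ is an isomorphism of algebraic tori defined over $\bZ$. Once this is established, $F^*$ identifies the irreducible components of $(F^*)^{-1}(\sV^i_k(U))$ with those of $\sV^i_k(U)$ and preserves both the subtorus and the torsion-translate properties, so Theorem \ref{singmain} yields the claim immediately. Note also that $U$ is smooth, since the unique singular point $O$ of $\scrY$ lies in $f^{-1}(0)$ and hence outside $U$, so Theorem \ref{singmain} is indeed applicable.

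To see that $F^*$ is an isomorphism, it suffices to show that the induced map on first homology,
\[
F_*\colon H_1(U,\zz) \;\longrightarrow\; H_1((\cc^*)^r,\zz) = \zz^r,
\]
is an isomorphism. By the local conic structure, $(\scrY\cap B)\setminus\{O\}$ is homotopy equivalent to $\link(\scrY)$, and so simply connected by assumption; it is moreover a complex manifold, since $O$ is the unique singular point of $\scrY$. The space $U$ is obtained from this manifold by removing the pure codimension one analytic subset $f^{-1}(0)\cap((\scrY\cap B)\setminus\{O\})$, whose irreducible components are the sets $Z_j := f_j^{-1}(0)\cap((\scrY\cap B)\setminus\{O\})$: by hypothesis each $f_j^{-1}(0)\cap\scrY$ is an irreducible divisor in $\scrY$, and removing the single isolated point $O$ leaves it irreducible. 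Hence the standard meridian argument for hypersurface complements in complex manifolds shows that $\pi_1(U)$ is normally generated by meridians around the $Z_j$, and in particular $H_1(U,\zz)$ is generated by the $r$ meridian classes $\mu_1,\ldots,\mu_r$.

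To conclude, I verify that $F_*(\mu_j) = e_j$, the $j$-th standard basis vector of $\zz^r$: at a smooth point of $Z_j$ a small meridian loop is sent by $f_j$ to a loop of winding number one around $0\in\cc^*$, and by $f_i$ (for $i\neq j$) to one of winding number zero. Since the $e_j$ are $\zz$-linearly independent, every $\zz$-linear relation among the $\mu_j$ in $H_1(U,\zz)$ is forced to be trivial, so the $\mu_j$ form a $\zz$-basis and $F_*$ is an isomorphism. The only subtlety I anticipate, which should not be a genuine obstacle, is the meridian-generation statement in this singular-ambient setting where the divisors $Z_j$ may have their own singularities away from $O$; this is a standard consequence of Zariski--van Kampen-type results for complements of analytic hypersurfaces in complex manifolds, applied to the smooth complex manifold $(\scrY\cap B)\setminus\{O\}$.
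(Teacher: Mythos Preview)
Your proposal is correct and follows exactly the same approach as the paper: reduce to Theorem \ref{singmain} by proving that $F^*$ is an isomorphism, which in turn reduces to showing that $F_*\colon H_1(U,\zz)\to H_1((\cc^*)^r,\zz)$ is an isomorphism. The paper dispatches this last step in one line by citing \cite{DL}, whereas you spell out the meridian-generation argument in the simply connected manifold $(\scrY\cap B)\setminus\{O\}$; your added details are accurate and match what that citation provides.
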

\begin{proof}
It suffices to show that the map $F^*: \mb((\cc^*)^r)\to \mb(U)$ in the theorem is an isomorphism.

When $(\scrY, O)$ and $f$ satisfies the assumptions in the corollary,  the map $F_*: H_1(U,\zz)\to H_1((\cc^*)^r, \zz)$ induced by $F=(f_j)_{1\leq j\leq r}: U\to (\cc^*)^r$ is an isomorphism (see e.g. \cite{DL}). Therefore $F^*: \mb((\cc^*)^r)\to \mb(U)$ is an isomorphism. 
\end{proof}

\begin{proof}[Proof of Theorem \ref{singmain}]
First, we want to reduce to the case when all $f_i$ are algebraic functions. Since the ambient space is not necessarily smooth, our earlier approach in Section \ref{Def} does not apply any more. Here we use a different but similar argument. 

As in the proof of Proposition \ref{irreduciblepolynomial}, by multiplying each $f_j$ with some analytic unit, we can assume all $f_j$ are algebraic power series. Equivalently, there exists an \'etale map $\iota: \tilde{\scrX}\to \scrX$, whose image contains $O$, such that $f_j\circ \iota$ are algebraic functions on $\tilde{\scrX}$. Let $\tilde{\scrY}=\iota^{-1}\scrY$. Choose any $\tilde{O}\in \iota^{-1}(O)$. Analytically, the germ $(\scrY, O)$ together with functions $(f_j)_{1\leq j\leq r}$ is isomorphic to the germ $(\tilde{\scrY}, \tilde{O})$ together with functions $(f_j\circ \iota)_{1\leq j\leq r}$. Therefore, by replacing $(\scrY, O)\subset (\scrX, O)$ with $(\tilde{\scrY}, \tilde{O})\subset (\tilde{\scrX}, \tilde{O})$ and replacing $(f_j)_{1\leq j\leq r}$ with $(f_j\circ \iota)_{1\leq j\leq r}$, we can assume all $f_j$ are algebraic functions on $(\scrX, 0)$. 

Next, we need to reduce to the case when $\scrX$, $\scrY$ and $f_j$ are all defined over $\bar{\qq}$. We can simply take a common log resolution $\pi: \scrX'\to \scrX$ such that 
\begin{itemize}
\item every irreducible component of $\pi^{-1}(O)$ is of codimension 1;
\item $\pi^{-1}(\scrY)$ is smooth and birational to $\scrY$;
\item $(f\circ \pi)^{-1}(O)$ is a normal crossing divisor in $\scrX$;
\item $(f\circ \pi)^{-1}(O)\cap \pi^{-1}(\scrY)$ is a normal crossing divisor in $\scrY$;
\item the center of each blow-up is contained in the zero locus of $f$ or the singular locus of $\scrY$. In other words, $\pi$ is an isomorphism over $\scrX\setminus (\scrY_{\textrm{sing}}\cup f^{-1}(0))$. 
\end{itemize}
Then the same argument as in Proposition \ref{Qbarpolynomial} allows us to deform $\scrX$, $\scrY$ and $f_j$ simultaneously to varieties and functions defined over $\bar{\qq}$ without changing the small ball complement $U$ and the map $F=(f_j)_{1\leq j\leq r}: U\to (\cc^*)^r$, up to homotopy. 


Finally, we need to define the $\cD$-module cohomology jump loci and relate them with the local system jump loci as Theorem \ref{thrmS}. Since the construction and the proofs are essentially same as in Section \ref{dmod}, we shall only sketch the main steps.
 
Let $f=\prod_{1\leq j\leq r}f_j$. Consider the diagram

$$
\xymatrix{
& \scrY\setminus (f^{-1}(0)\cap \scrY)\ar[r]^{\ \ \ \ \ \ p} \ar[d]^q  &  \scrX\setminus f^{-1}(0) \ar[d]^j  \ar[r]^{\ \ \ \ F}& (\bC^*)^r\\
\{O\}\ar[r]^u & \scrY \ar[r]^i&   \scrX &
}
$$
The maps $i,j,p,q,u$ are the natural embeddings, $j$ and $q$ are open, and the square in the middle is cartesian. Everything is defined over $\bar{\qq}$.

Define the {\it $\cD$-module cohomology jump loci of $F$ in $(\scrY, O)$} by
$$
\Sigma^i_k(F, \scrY, O)=\left\{\al\in \cc^r\mid\dim_{\cc} H^i\left(u^\bigstar \circ i^+\left(\cD_\scrX[s]f^s\otimes_{\bC[s]}\bC[s]/(s-\al)\right)\right)\geq k\right\}.
$$
Here $\cD_\scrX$ is the ring of algebraic linear differential operators on $\scrX$, and $\cD_\scrX [s]f^s$ and $\bC[s]/(s-\al)$ are the same as in Section \ref{dmod}. Recall that the special pullback $i^+$ on $\cD$-modules
is the $\cD$-module counterpart of the special pullback $i^!$ on complexes of constructible sheaves.

Since everything in the diagram above is actually defined over $\bar{\bQ}$, it follows as in Section \ref{dmod} that  $\Sigma^i_k(F, \scrY, O)$ is invariant under the obvious action of $Gal(\cc/\bar{\qq})$ on $\cc^r$.

Next, we claim that when the real part of each $\alpha_i$ is sufficiently negative, $\alpha=(\alpha_i)_{1\leq i\leq r}\in \Sigma^i_k(F, \scrY, O)$ if and only if $\Exp(\alpha)\in (F^*)^{-1}(\sV^{i+2m-n}_k(U))$, where $m=\dim \scrY$ and $n=\dim \scrX$. By the Riemann-Hilbert correspondence, as before, the first of these conditions is equivalent with 
\begin{equation}\label{eqdd}
\dim _\bC H^i( u^{-1}\circ i^!\circ Rj_*(L_{\al}[n]))\ge k,
\end{equation}
where $L_\al$ is the local system on $\scrX\setminus f^{-1}(0)$ obtained as the pullback under $F$ of $\Exp(\al)\in M_B((\bC^*)^r)$. Since the square in the diagram above is cartesian, $$i^! \circ Rj_*=Rq_*\circ p^!.$$ Since $p$ is an embedding of smooth irreducible varieties, 
$$p^!(L_\al[n])=\bD\circ p^{-1}(L_{-\al}[n])=\bD ( p^{-1}L_{-\al} [n] ) = p^{-1}(L_{\al}) [2m-n],
$$
where $\bD$ is the Verdier dual. Hence (\ref{eqdd}) is equivalent to
$$
\dim _\bC H^i( u^{-1}\circ Rq_*(p^{-1}(L_{\al})[2m-n]))\ge k.
$$
This is equivalent with
$$
\dim _\bC H^{i+2m-n}( u^{-1}\circ Rq_*(p^{-1}(L_{\al})))\ge k.
$$
As before, the pullback $u^{-1}$ is the stalk at $O$, and hence the left-hand side computes the cohomology of $p^{-1}(L_{\al})$ on the small ball complement $U$. This proves the claim.

Thus, we proved the analog of Theorem \ref{thrmS}. Now, we can apply Theorem \ref{goodexpo} to finish the proof of the theorem. 
\end{proof}


\begin{thebibliography}{24}

\bibitem{A} M. Artin, Algebraic approximation of structures over complete local rings, Publ. Math.
IHES 36, (1969), 23-58.

\bibitem{BO}  R. Bahloul, T. Oaku, Local Bernstein-Sato ideals: algorithm and examples. J. Symbolic Comput. 45 (2010), no. 1, 46-59.

\bibitem{be} J.N. Bernstein, Algebraic theory of $\cD$-modules. \url{http://www.math.uchicago.edu/\%7Emitya/langlands.html}

\bibitem{B} N. Budur, Bernstein-Sato ideals and local systems.  Ann. Inst. Fourier 65 (2015) no. 2, 549-603.

\bibitem{BLSW} N. Budur, Y. Liu, L. Saumell, B. Wang, Cohomology support
loci of local systems. To appear in Michigan Math. J. 66 (2017). arXiv:1511.08013.

\bibitem{BW} N. Budur, B. Wang, Cohomology jump loci of quasi-projective varieties.  Ann. Sci. \'Ec. Norm. Sup\'er. (4) 48 (2015), no. 1, 227-236. 

\bibitem{BW-surv} N. Budur, B. Wang, Recent results on cohomology jump loci. To appear in volume in honor of S. Zucker's 65th birthday. arXiv:1507.06714.

\bibitem{BPR} M. Bilski, A. Parusi\'nski, G. Rond, Local topological algebraicity of analytic function germs. J. Algebraic Geom. 26 (2017), 177-197.

\bibitem{Di} A. Dimca, {\it Sheaves in topology.} Universitext. Springer-Verlag, Berlin, 2004. xvi+236 pp.

\bibitem{D-ad} A. Dimca, On admissible rank one local systems. J. Algebra 321 (2009), no. 11, 3145-3157. 

\bibitem{DL} A. Dimca, A. Libgober, Local topology of reducible divisors. {\it Real and complex singularities,} 99-111, Trends Math., Birkh\"auser, Basel, 2007



\bibitem{FdB2} J. Fern\'andez de Bobadilla, On homotopy types of complements of analytic sets and Milnor fibres. {\it Topology of algebraic varieties and singularities}, 363-367, Contemp. Math., 538, Amer. Math. Soc., Providence, RI, 2011.

\bibitem{FdB} J. Fern\'andez de Bobadilla, On algebraic representatives of topological types of analytic hypersurface
germs, J. Algebraic Geom. 21 (2012), 789-797.

\bibitem{G} R. Gunning, {\it Introduction to holomorphic functions of several variables. Vol. II. Local theory.} The Wadsworth \& Brooks/Cole Mathematics Series. Wadsworth \& Brooks/Cole Advanced Books \& Software, Monterey, CA, 1990. xx+218 pp. ISBN: 0-534-13309-6

\bibitem{HR}  G. Rond, Artin Approximation.  arXiv:1506.04717. 
  
\bibitem{I} S. Izumi, Increase, convergence and vanishing of functions along a Moishezon space. J. Math. Kyoto Univ.  32, no. 1 (1992), 245-258.

\bibitem{KK} M. Kapovich, J. Koll\'ar,  Fundamental groups of links of isolated singularities. J. Amer. Math. Soc. 27 (2014), no. 4, 929-952.

\bibitem{mm} M. Laurent, \'Equations diophantiennes exponentielles. Invent. Math. 78 (1984), no. 2, 299-327.

\bibitem{Li-gap} A. Libgober, Non-vanishing loci of Hodge numbers of local systems,  Manuscripta Math.  128  (2009),  no. 1, 1-31.

\bibitem{LM} Y. Liu, L. Maxim, {Characteristic varieties of hypersurface complements.} To appear in Adv. Math. arXiv:1411.7360.

\bibitem{M} J. Milne, Lectures on Etale Cohomology (v2.21), 2013,   202 pages. \url{www.jmilne.org/math/}.

\bibitem{Ni} J. Nicaise, Zeta functions and Alexander modules. math.AG/0404212.

\bibitem{OTa} T. Oaku, N. Takayama, An algorithm for de Rham cohomology groups of the complement of an affine variety via $\cD$-module computation. {\it Effective methods in algebraic geometry (Saint-Malo, 1998)}. J. Pure Appl. Algebra 139 (1999), no. 1-3, 201-233.

\bibitem{pr} D. Prill, Local classification of quotients of complex manifolds by discontinuous groups.
Duke Math. J. 34 (1967), 375-386. 

\bibitem{sab} C. Sabbah, Proximit\'{e} \'{e}vanescente. I. La structure polaire d'un
${\cD}$-module. Compositio Math. 62 (1987), no. 3, 283--328.
Proximit\'{e} \'{e}vanescente. II. \'{E}quations fonctionnelles
pour plusieurs fonctions analytiques.  ibid.  64  (1987),  no. 2,
213--241.

\bibitem{Sab} C. Sabbah, Modules d'Alexander et $\cD$-modules. Duke Math. J. 60 (1990), no. 3, 729-814. 

\bibitem{S} C. Simpson, Subspaces of moduli spaces of rank one local systems. Ann. Sci. \'Ecole Norm. Sup. (4) 26 (1993), no. 3, 361-401. 

\bibitem{W16} B. Wang,  Torsion points on the cohomology jump loci of compact K\"ahler manifolds. Math. Res. Lett. 23 (2016), no. 2, 545-563.


\end{thebibliography}
\end{document}